\date{\today}
\DeclareMathOperator{\st}{st}
\DeclareMathOperator{\Aut}{Aut}
\DeclareMathOperator{\rst}{rst}
\DeclareMathOperator{\Sym}{Sym}
\newcommand{\Z}{\mathbb{Z}}
\newcommand{\G}{\mathcal{G}}
\newcommand{\F}{\mathbb{F}}
\newcommand{\N}{\mathbb{N}}
\newcommand{\C}{\mathcal{C}}
\newcommand{\np}{\unlhd_p}
\newcommand{\nC}{\unlhd_{\mathcal{C}}}
\newcommand{\A}{\mathcal{A}}
\newtheorem{theorem}{Theorem}
\newtheorem{proposition}[theorem]{Proposition}
\newtheorem{corollary}[theorem]{Corollary}
\newtheorem{lemma}[theorem]{Lemma}
\theoremstyle{definition}
\newtheorem{definition}[theorem]{Definition}
\title{Pro-$\mathcal{C}$ congruence properties for groups of rooted tree automorphisms}
\author[A.\ Garrido]{Alejandra Garrido}
\address{School of Mathematical \& Physical Sciences\\
University of Newcastle Australia\\
University Drive\\
2308, Callaghan, NSW, Australia}
\email{alejandra.garrido@newcastle.edu.au}
\author[J.\ Uria-Albizuri]{Jone Uria-Albizuri}
\address{Basque Center of Applied Mathematics\\ Mazarredo, 14.\\ 48009 \\ Bilbao, Basque Country - Spain}
\email{juria@bcamath.org}
\date{\today}
\thanks{A. Garrido was supported by the Alexander von Humboldt Foundation. J. Uria-Albizuri acknowledges financial support from the Spanish Government, grant
MTM2014-53810-C2-2-P, and from the Basque Government, grant IT974-16 and the predoctoral grant PRE-2014-1-347. This research is also supported by the Basque Government through the BERC 2018-2021 program and by the Spanish Ministry of Science, Innovation and Universities: BCAM Severo Ochoa accreditation SEV-2017-0718.}
\begin{document}

\begin{abstract}
 We propose a generalisation of the congruence subgroup problem for groups acting on rooted trees. 
 Instead of only comparing the profinite completion to that given by level stabilizers, we also compare pro-$\C$ completions of the group, where $\C$ is a pseudo-variety of finite groups. 
 A group acting on a rooted, locally finite tree has the $\C$-congruence subgroup property ($\C$-CSP) if its pro-$\C$ completion coincides with the completion with respect to level stabilizers. 
 We give a sufficient condition for a weakly regular branch group to have the $\C$-CSP. 
 In the case where $\C$ is also closed under extensions (for instance the class of all finite $p$-groups for some prime $p$), our sufficient condition is also necessary. 
 We apply the criterion to show that the Basilica group and the GGS-groups with constant defining vector (odd prime relatives of the Basilica group) have the $p$-CSP. 
\end{abstract}

\maketitle

\section{Introduction}

Groups of rooted tree automorphisms have been studied intensively for the past few decades.
One of the driving factors for this was the appearance in the 1980s of examples of groups  with properties hitherto thought of as exotic (intermediate word growth, finitely generated infinite torsion, amenable but not elementary amenable, etc).
The theory of groups acting on rooted trees, and (weakly) branch groups in particular, has come a long way since the early days in which it just seemed a collection of curious examples and is now an important part of group theory, with connections to other areas of mathematics
(see \cite{bartholdi-grigorchuk-sunic:branch,grigorchuk:unsolved,nekrashevych:self-similar}).

The congruence subgroup problem (or property), first studied in the context of arithmetic groups, and $\mathrm{SL_n}(\Z)$ in particular (\cite{bass-lazard-serre:csp}), has been adapted and generalised to several other natural contexts. 
The classical version of this problem asks whether every finite index subgroup of $\mathrm{SL_n}(\Z)$ contains the kernel of the map $\mathrm{SL}_n(\Z)\rightarrow \mathrm{SL}_n(\Z/m\Z)$ for some $m\in \N$, 
the filtration consisting of these kernels being an obvious one to consider when studying finite quotients of $\mathrm{SL_n}(\Z)$. 
One of the most natural generalisations of this problem is to the context of groups acting on rooted, locally finite, infinite trees (henceforth ``rooted trees''), as every residually finite group acts faithfully on some such tree.
The \emph{congruence subgroup problem} then asks whether every finite index subgroup contains some level stabilizer. 
This can be rephrased in terms of profinite completions as follows. 
For a group $G$ acting faithfully on a rooted tree, taking the level stabilizers $\{\st_G(n) \mid n\geq0\}$ as a neighbourhood basis for the identity gives a topology on $G$ -- the \emph{congruence topology} --
and the completion $\overline{G}$ of $G$ with respect to this topology is a profinite group called the \emph{congruence completion} of $G$.
As $G$ acts faithfully on the tree, we have $\bigcap_n \st_G(n)=1$, so $G$ embeds in $\overline{G}$.
A fortiori, $G$ also embeds in its profinite completion $\widehat{G}$ which maps onto $\overline{G}$. 
The congruence subgroup problem asks whether the map $\widehat{G}\to \overline{G}$ is an isomorphism.
If the answer is positive, then $G$ has the \emph{congruence subgroup property}.

The congruence subgroup problem for groups acting on rooted trees has so far only really been considered for branch groups (see Section \ref{section:definitions} for the definition).
It is known that a number of ``canonical'' examples have the congruence subgroup property (Grigorchuk group, Gupta--Sidki groups). 
The first examples of branch groups without this property were tailor-made in \cite{pervova:csp}.
The problem was considered systematically for the first time in \cite{bartholdi:csp}, where the authors also show that the Hanoi towers group (see \cite{grigorchuk-sunic:hanoi}) does not have the congruence subgroup property. 

We propose to study a generalisation of this problem in two natural directions simultaneously.
Firstly, we consider weakly branch, but not necessarily branch groups. 
Secondly, we allow other completions. 
For a class $\mathcal{C}$ of finite groups, the pro-$\C$ completion $\widehat{G}_{\C}$ of a group $G$ is the inverse limit of all quotients of $G$ that lie in $\C$. 
The congruence subgroup property can now be modified to the context of pro-$\C$ completions, where it is sometimes more natural because all quotients by level stabilizers lie in some class to which not all finite quotients of $G$ belong. 
Consider a group $G\leq \Aut T$ 
 and a class $\C$ of finite groups.
 The weakest possible requirement on $\C$ is that it be a formation, but for our purposes $\C$ should also be closed under taking subgroups, i.e., a pseudo-variety.
Then $G$ satisfies the \emph{$\C$-congruence subgroup property}, or \emph{$\C$-CSP} for short, if every quotient of $G$ lying in $\C$ is a quotient of some $G/\st_G(n)$. 
In other words, the congruence completion $\overline{G}$ maps onto the pro-$\C$ completion $\widehat{G}_{\C}$.
If all quotients $G/\st_G(n)$ happen to be in $\C$, then $G$ has the $\C$-CSP if and only if $\overline{G}$ is isomorphic to $\widehat{G}_{\C}$.

Our main result is a sufficient condition for a weakly regular branch group to have the $\C$-CSP. 
Let $\psi:\Aut T\rightarrow \Aut T\wr \Sym(d)$ be the isomorphism induced by the natural identification of the $d$-regular rooted tree $T$ with any  of its subtrees at distance 1 from the root.
The rest of the notation and terms used in the next theorem is explained in Section \ref{section:definitions}.

\begin{theorem}\label{thm:csp}
Let $G\leq \Aut T$ be a weakly regular branch group over a subgroup $R$ and let $\C$ be a pseudo-variety of finite groups. 
	Suppose that there exists $H\unlhd G$ such that $R\geq H\geq R'\geq L$ where $L:=\psi^{-1}(H\times \overset{d}{\dots}\times H)$.
	If $G$ has the $\mathcal{C}$-CSP modulo $H$ and $H$ has the $\mathcal{C}$-CSP modulo $L$, then $G$ has the $\mathcal{C}$-CSP.
\end{theorem}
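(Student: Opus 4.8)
The plan is to take an arbitrary $N \unlhd G$ with $G/N \in \C$ and produce a level stabilizer contained in $N$; by definition this is the $\C$-CSP. It is convenient to set up the descending chain of normal subgroups $L_0 := H$, $L_1 := L$ and, inductively, $L_{k+1} := \psi^{-1}(L_k \times \cdots \times L_k)$ ($d$ factors), noting that $L_k \le \st_G(k+1)$, so that $\bigcap_k L_k = 1$ by faithfulness of the action. I would begin with two reductions. First, because $\C$ is a pseudo-variety, $G/NH$ is a quotient of $G/N$ and hence lies in $\C$; as $NH \unlhd G$ and $NH \ge H$, the hypothesis that $G$ has the $\C$-CSP modulo $H$ yields a level $a_0$ with $\st_G(a_0) \le NH = NL_0$. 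Second, $H/(N\cap H) \cong HN/N$ is a subgroup of $G/N$, hence lies in $\C$, so with $M := (N\cap H)L \unlhd H$ we get $H/M \in \C$ and $M \ge L$; the hypothesis that $H$ has the $\C$-CSP modulo $L$ then gives a level $b$ with $\st_H(b) \le (N\cap H)L$.

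The engine of the proof is the self-similar identity $\psi(L) = H \times \cdots \times H$, which refines to $\psi(\st_G(n+1) \cap L) = \st_H(n) \times \cdots \times \st_H(n)$: the level-$(n+1)$ sections of an element of $L$ are exactly level-$n$ stabilizer elements of $H$. I would use this to run the bootstrapping claim that $\st_G(a_k) \le NL_k$ implies $\st_G(a_{k+1}) \le NL_{k+1}$ for some larger $a_{k+1}$. Reading the identity $k$ levels deep, improving $L_k$ to $L_{k+1} = \psi^{-1}(L_k \times \cdots \times L_k)$ amounts, coordinate by coordinate, to improving the bottom copy of $H$ to $L$, and that is exactly what the second reduction provides: since $\psi(\st_G(b+1) \cap L) = \st_H(b) \times \cdots \times \st_H(b) \le ((N\cap H)L) \times \cdots \times ((N\cap H)L)$, we get $\st_G(b+1) \cap L \le \psi^{-1}((N\cap H) \times \cdots \times (N\cap H)) \cdot L_2$.

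The main obstacle is precisely this coordinate-by-coordinate improvement, and specifically the decoupling of the $d^k$ coordinates at depth $k$: the containment $\st_H(b) \le (N\cap H)L$ holds in each coordinate separately, but the phrase ``modulo $N$'' couples the coordinates, so the factor $\psi^{-1}((N\cap H) \times \cdots \times (N\cap H))$ cannot naively be discarded into $N$. To separate the coordinates I would use that $N \unlhd G$ together with the weakly regular branch structure over $R$: for a section $y$ sitting in a single coordinate, the single-coordinate element $\psi^{-1}(\dots, y, \dots)$ can be reached from genuine elements of $N$ by forming commutators with elements of $\rst_R(v)$, which exist because $\psi(R) \ge R \times \cdots \times R$, and these commutators lie in $N$ by normality. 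The role of the hypothesis $H \ge R' \ge L$ is exactly to guarantee that the commutators so produced stay inside $H$ and that the single-coordinate pieces obtained are already absorbed by $N$ together with the next layer $L_{k+1}$. Carrying out this commutator bookkeeping, and checking that the pieces genuinely land in $N$, is the technical heart of the argument.

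Finally I would conclude by a finiteness argument. Granting the bootstrapping step, induction gives $\st_G(a_k) \le NL_k$ for every $k$, so the images $\overline{\st_G(a_k)}$ form a descending chain of subgroups of the finite group $G/N$, which must stabilise. The self-similar identity constrains the stable value, forcing it into the image of every layer $L_k$; since $\bigcap_k L_k = 1$, one further application of $\psi$ to the stable value forces it to be trivial. Hence $\overline{\st_G(a_k)} = 1$, that is $\st_G(a_k) \le N$, for $k$ large enough, which is the required level stabilizer and completes the proof.
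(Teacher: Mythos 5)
Your overall architecture (the tower $L_0=H$, $L_1=L$, $L_{k+1}=\psi^{-1}(L_k\times\cdots\times L_k)$, fed by the two CSP-modulo hypotheses) matches the paper's, but there are two genuine gaps. The first is the coordinate-decoupling step, which you yourself flag as ``the technical heart of the argument'' and do not carry out; it is moreover doubtful that the commutator device you sketch can be made to work, since a single-coordinate element $\psi^{-1}(1,\dots,n_i,\dots,1)$ with $n_i\in N\cap H$ simply need not lie in $N$, and commutating with rigid stabilizers (as in Lemma \ref{lemma:rist'}) produces commutators rather than the elements themselves, so it does not recover the containment you need. The paper avoids the coupling entirely by changing what is being proved: instead of tracking the fixed global $N$ through the tower, it shows that $L_n$ has the $\C$-CSP modulo $L_{n+1}$ for every $n$, and then chains these statements together with ``$G$ has the $\C$-CSP modulo $H$'' via Lemma \ref{lem:csp_transitivity}. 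In that formulation the decoupling is immediate: given $L_{n+1}\le N\nC L_n$, one intersects $N$ with each coordinate subgroup $H_i$ of $\psi_n(L_n)$, applies the hypothesis on $H$ in each coordinate separately to get $\st_H(m)\le N\cap H_i$, and the product of these $d^n$ subgroups is a product of subgroups of $N$, hence contained in $N$; no element of $N$ ever needs to be split across coordinates.

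The second gap is the endgame. From $\st_G(a_k)\le NL_k$ for all $k$ and $\bigcap_k L_k=1$ you cannot conclude $\st_G(a_k)\le N$ for large $k$: intersection does not commute with taking the join with $N$, so $\bigcap_k NL_k$ may be strictly larger than $N$ (already $\bigcap_k(2\Z+3^k\Z)=\Z\neq 2\Z$ illustrates the failure). Your descending chain of images in $G/N$ does stabilise, but there is no reason for the stable value to be trivial, and ``one further application of $\psi$'' does not force it to be. The paper closes this hole with Lemma \ref{lemma:rist'}: the hypothesis $R\ge H\ge R'\ge L$ guarantees $L_{n+1}\le\psi_n^{-1}(R'\times\overset{d^n}{\dots}\times R')\le\rst_G(n)'$, so every non-trivial $N\unlhd G$ already contains some $L_{n+1}$ outright, and the statement ``$G$ has the $\C$-CSP modulo $L_n$ for every $n$'' then finishes without any limiting argument. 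Note that in your write-up the hypothesis $R'\ge L$ is never used in a load-bearing way, whereas in the paper it is precisely what powers this final step.
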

If $\C$ is extension-closed, then this condition is also necessary. 

We then apply the criterion to some examples of weakly regular branch groups, the Basilica group acting on the binary tree 
and an analogue of it acting on the $p$-regular tree for $p$ an odd prime, the GGS-group with constant defining vector. 
This last group was studied in \cite{bartholdi-grigorchuk:parabolic} for $p=3$ and for general $p$ in \cite{alcober-zugadi:hausdorff,alcober-garrido-uria:GGS}.
For the appropriate $p$, each of these groups is contained in a Sylow pro-$p$ subgroup of $\Aut T$ consisting of elements that permute vertices according to a fixed cyclic permutation of order $p$ (when $p=2$, this is already the whole of $\Aut T$). 
In particular, all quotients by level stabilizers  are $p$-groups, being subgroups of an iterated wreath product $C_p\wr \dots\wr C_p$. 
None of these groups have the CSP for the simple reason that they virtually map onto $\Z$ and therefore have quotients of arbitrary order.
(This is actually the same reason that many lattices in rank 1 Lie groups fail to have the CSP.)
However, according to our criterion, they do have the  $\C$-CSP when $\C$ is the pseudo-variety of all finite $p$-groups.
This implies that, if $G$ denotes any of these groups, the kernel of $\widehat{G} \rightarrow \overline{G}$ is the inverse limit of all finite quotients of $G$ whose order is coprime to $p$.
By contrast, the examples constructed by Pervova \cite{pervova:csp} still fail to have the $\C$-CSP as the derived subgroup, of $p$-power index, does not contain any level stabilizer. 

It is worth mentioning that, even though we only treat the pseudo-variety of finite $p$-groups in our examples, the criterion is valid for any pseudo-variety $\C$ of finite groups. 
Therefore it is interesting to consider weakly branch groups whose quotients by level stabilizers all lie in other pseudo-varieties of finite groups such as  finite nilpotent groups ($\C_n$), or  finite solvable groups ($\C_s$). 
The case of the Hanoi towers group $H$ is particularly interesting, because although it acts on the ternary tree,  the quotients by level stabilizers are not 3-groups, they are only solvable. 
Despite this, and the fact that $H$ is just non-solvable (it is not solvable but all of its proper quotients are), it does not have the $\C_s$-CSP, because the derived subgroup $H'$ does not contain any level stabilizer. 
It would be interesting to see more constructions of weakly regular branch groups with ``intermediate" CSPs.

Another line of investigation worth pursuing involves calculating the kernels of the various maps between all these possible completions of a weakly branch group. 
In \cite{bartholdi:csp}, the authors give a general method for calculating the kernel of the map $\widehat{G}\to \overline{G}$ for a branch group $G$. 
Unfortunately, this method does not carry through to weakly branch but not branch groups, because it really makes use of the fact that the rigid stabilizers have finite index and that the completion of $G$ with respect to this filtration lies between $\widehat{G}$ and $\overline{G}$. 
It is therefore desirable to find alternative methods for this wider setting and it seems plausible that using the various pro-$\C$ completions as ``stepping stones'' will help with this problem.

\subsection*{Acknowledgements.} We are grateful to B. Klopsch for suggesting the problem  and valuable discussions and to G. A. Fern\'{a}ndez Alcober for suggesting several improvements.
D. Francoeur, B. Klopsch and H. Sasse pointed out an inaccuracy in \cite{grigorchuk-zuk:torsion-free} that affected our calculations (but not the main result) for the Basilica group in a previous version. 



\section{Definitions and preliminaries}\label{section:definitions}

\subsection*{Pseudo-varieties of groups}

\begin{definition}
Let $\mathcal{C}$ be a class of finite groups.
 Say that $\mathcal{C}$ is a \emph{pseudo-variety} of finite groups if the following properties are satisfied:
\begin{itemize}
\item[$(\mathcal{C}_1)$] it is closed under taking subgroups, that is, if $G\in\mathcal{C}$ and $H\leq G$ then $H\in\mathcal{C}$,
\item[$(\mathcal{C}_2)$] it is closed under taking quotients, that is, if $G\in\mathcal{C}$ and $N\unlhd G$ then $G/N\in\mathcal{C}$,
\item[$(\mathcal{C}_3)$] it is closed under taking finite direct products, that is, if $G_1,\dots,G_k\in\mathcal{C}$ for $k\in \N$ then $\prod_{i=1}^k G_i\in\mathcal{C}.$
\end{itemize}
If $\C$ is also closed under taking extensions of groups in $\C$, it is an \emph{extension-closed pseudo-variety.}
\end{definition}


To simplify notation, $N\unlhd_{\mathcal{C}}G$ will denote that $N\unlhd G$ and $G/N\in\mathcal{C}$.

The following observations are straightforward from the above definition and will be used in future without reference. 

\begin{lemma}\label{variety_properties}
Let $G$ be a group and $\mathcal{C}$ a pseudo-variety of finite groups. 
\begin{enumerate}
\item If $N_1,N_2\nC G$ then $N_1\cap N_2\nC G$.
\item If $N\nC G$ and $N\leq K \unlhd G$ then $K\nC G$.
\item If $N\nC G$ and $K\leq G$ then $N\cap K\nC K$.
\item If $\alpha:G_1\longrightarrow G_2$ is a homomorphism and $N_1\nC G_1$ then $\alpha(N_1)\nC \alpha(G_1).$
\end{enumerate}
\end{lemma}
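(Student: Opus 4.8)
The plan is to handle the four parts independently, in each case reducing to one of the defining axioms $(\C_1)$--$(\C_3)$ by means of a standard isomorphism theorem. In every part the normality assertion is immediate from elementary group theory, so the only substantive point is to exhibit the relevant quotient as a subgroup, a quotient, or a product of groups already known to lie in $\C$.

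For part (i), I would note that $N_1\cap N_2\unlhd G$ and that the map $g\mapsto(gN_1,gN_2)$ induces an embedding $G/(N_1\cap N_2)\hookrightarrow G/N_1\times G/N_2$; since $G/N_1,G/N_2\in\C$, axiom $(\C_3)$ places the product in $\C$ and then $(\C_1)$ yields $G/(N_1\cap N_2)\in\C$. For part (ii), normality of $K$ is given, and the third isomorphism theorem gives $G/K\cong(G/N)/(K/N)$, so $(\C_2)$ applies to $G/N\in\C$. For part (iii), $N\cap K\unlhd K$ because $N\unlhd G$, and the second isomorphism theorem identifies $K/(N\cap K)$ with the subgroup $NK/N$ of $G/N$, so $(\C_1)$ finishes. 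For part (iv), $\alpha(N_1)$ is normal in $\alpha(G_1)$ since it is the image of a normal subgroup under a surjection onto $\alpha(G_1)$, and $\alpha$ induces a surjection $G_1/N_1\twoheadrightarrow\alpha(G_1)/\alpha(N_1)$, exhibiting the target as a quotient of $G_1/N_1\in\C$, so that $(\C_2)$ applies.

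I do not anticipate any genuine obstacle: each item is a one-line consequence of an isomorphism theorem together with the matching closure axiom, and the lemma merely records that these closure properties behave as one would expect. The only spot meriting a moment's care is part (iv), where one must take the ambient group to be $\alpha(G_1)$ rather than $G_2$ in order for both the normality and the quotient statements to hold; otherwise nothing beyond the three axioms is needed.
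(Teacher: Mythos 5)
Your proof is correct; the paper itself gives no proof of this lemma, stating only that the observations are straightforward from the definition, and your argument via the standard isomorphism theorems together with axioms $(\C_1)$--$(\C_3)$ is exactly the intended one. Your remark on part (iv) about working inside $\alpha(G_1)$ rather than $G_2$ is the right point of care and matches how the lemma is phrased.
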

\subsection*{Branch and weakly branch groups}

A  \emph{level homogeneous} rooted tree is one where all vertices at a given distance from the root have the same finite valency.
A faithful action of a group $G$  on such a tree is a \emph{weakly branched action} if it is transitive on each level of the tree and if for each vertex $v$ there is a non-trivial element of $G$ whose support is contained in the subtree rooted at $v$. 
The set of all elements of $G$ which are only supported on the subtree rooted at $v$ is a subgroup of $G$, the \emph{rigid stabilizer} $\rst_G(v)$ of $v$. 
If $v, u$ are two distinct vertices of the same level, $\rst_G(u), \rst_G(v)$ commute. 
The subgroup $\rst_G(n):=\prod \{\rst_G(v)\mid v \text{ of level } n\}$ is called the \emph{$n$th level rigid stabilizer}.
A weakly branched action is a \emph{branched action} if $\rst_G(n)$ is of finite index in $G$ for every $n\in \N$. 
A group is \emph{(weakly) branch} if it has a faithful (weakly) branched action on some level homogeneous rooted tree.

Let $T$ denote the $d$-regular infinite rooted tree (all vertices have valency $d+1$, except the root, which has $d$), for an integer $d\geq 2$. 
Since $T$ is regular, the subtree rooted at any vertex may be identified with $T$.
Under this identification, we have an isomorphism $\psi:\Aut T\rightarrow \Aut T\wr \Sym(d)$ where 
$$\psi(\st(1))=\Aut T\times \overset{d}{\dots}\times \Aut T.$$ 
Inductively, we also have $\psi_n:\Aut T\rightarrow \Aut T\wr \Sym(d)\wr \overset{n}{\dots} \wr \Sym(d)$
and $$\psi_n(\st(n))=\Aut T\times \overset{d^n}{\dots}\times \Aut T$$
for each $n\in \N$.
A group $G\leq \Aut T$ is \emph{weakly regular branch} over a (non-trivial) subgroup $K$ if 
$$\psi(K)\geq K\times \overset{d}{\dots}\times K$$
 and $G$ acts transitively on all levels of $T$. 
This implies that $\psi(\rst_G(1))\geq K\times \overset{d}{\dots}\times K$ and, inductively, that $\psi_n(\rst_G(n))\geq K\times \overset{d^n}{\dots}\times K$ for each $n\in \N$. 
In particular, each $\rst_G(v)$ contains a copy of $K$ and so $G$ is a weakly branch group.

For convenience, let us record a fundamental lemma that can be extracted from the proof of \cite[Theorem 4]{grigorchuk:just-infinite}. 
\begin{lemma}\label{lemma:rist'}
 Let $T$ be a level homogeneous rooted tree and $G\leq \Aut T$ act transitively on every level of $T$. 
 For every non-trivial normal subgroup $N$ of $G$ there exists $n$ such that $N\geq \rst_G(n)'$.
\end{lemma}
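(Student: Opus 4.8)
The plan is to first reduce to a single subtree and then spread the result over a whole level. Concretely, I would show it suffices to produce one vertex $w$, at some level $n$, with $\rst_G(w)'\le N$. Indeed, since the rigid stabilizers of distinct vertices of the same level commute and are supported on disjoint subtrees, $\rst_G(n)$ is the internal direct product of the $\rst_G(x)$ over the vertices $x$ of level $n$, so $\rst_G(n)'=\prod_x \rst_G(x)'$. Moreover, as $G$ is transitive on level $n$, any such $x$ has the form $\sigma w$ with $\sigma\in G$, whence $\rst_G(x)'=\sigma\,\rst_G(w)'\,\sigma^{-1}\le \sigma N\sigma^{-1}=N$. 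Thus $\rst_G(w)'\le N$ for a single $w$ already forces $\rst_G(n)'\le N$.

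For the core step I would pass to the quotient $\overline{G}:=G/N$. Pick any $1\ne g\in N$; since a nontrivial tree automorphism preserves levels and moves some vertex, there is a vertex $v$, at some level $n$, with $w:=g(v)\ne v$. As $g$ is a tree automorphism sending $v$ to $w$, conjugation by $g$ carries the rigid stabilizer of $v$ onto that of $w$, i.e.\ $g\,\rst_G(v)\,g^{-1}=\rst_G(w)$. The key observation is that $\overline{g}=1$ in $\overline{G}$, so conjugation by $\overline{g}$ is trivial and the two images coincide: $\overline{\rst_G(v)}=\overline{\rst_G(w)}$. On the other hand $v$ and $w$ are distinct vertices of the same level, so $\rst_G(v)$ and $\rst_G(w)$ commute, and hence their images commute in $\overline{G}$. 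Combining these gives $[\overline{\rst_G(w)},\overline{\rst_G(w)}]=[\overline{\rst_G(v)},\overline{\rst_G(w)}]=1$, i.e.\ $\overline{\rst_G(w)}$ is abelian, which is precisely $\rst_G(w)'\le N$. Taking $n$ to be the level of $v$ and invoking the reduction above finishes the proof.

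The points to watch are the commuting/disjointness bookkeeping: one must use that distinct vertices of a common level have disjoint subtrees (so their rigid stabilizers commute, as recorded in the preliminaries) and that $\rst_G(n)$ really is the direct product of the vertex rigid stabilizers, in order to pass from one subtree to the full level. I expect the genuine conceptual hurdle — the step a naive attempt would stumble on — to be \emph{avoiding} explicit commutator computations: manipulating elements $[g,h]$ with $h\in\rst_G(w)$ directly produces members of $N$ that restrict to $[h,k]$ on the subtree at $w$ but drag along a companion term on the subtree at $v$ that is awkward to cancel. Working in $G/N$, where $\overline{g}=1$ collapses the two sibling rigid stabilizers onto a single abelian image, sidesteps this difficulty entirely.
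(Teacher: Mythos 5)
Your proof is correct and follows essentially the same route as the paper's: both pick a nontrivial $g\in N$ moving a vertex $v$, exploit that conjugation by $g$ carries $\rst_G(v)$ onto the commuting rigid stabilizer $\rst_G(g(v))$, and then spread the conclusion over the whole level by transitivity and normality. The paper phrases the key step as the explicit identity $[[g,x],y]=[x,y]$ for $x,y\in\rst_G(v)$, which is precisely the computation your passage to $G/N$ repackages.
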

\begin{proof}
 Let $g\in N$ be a non-trivial element and choose some vertex $v$ of $T$ which is moved by $g$. 
 Let $x, y\in \rst_G(v)$. 
 Since $N$ is normal, it contains $[[g,x],y]$ which equals $[x,y]$, as $y^g$ commutes with $x$ and $y$. 
 Knowing that $N\geq \rst_G(v)'$, the result follows using the fact that $N$ is normal and that all rigid stabilizers of the same level as $v$ are conjugate, because $G$ acts transitively. 
\end{proof}

\subsection*{The $\C$-congruence subgroup property} Let $G\leq\Aut T$ and let $\mathcal{C}$ be a pseudo-variety of finite groups.

\begin{definition}
	A group $G\leq \Aut T$ has the \emph{$\mathcal{C}$-congruence subgroup property} (abbreviated to \emph{$\mathcal{C}$-CSP})
	if every $N \trianglelefteq G$ satisfying  $G/N\in\mathcal{C}$ contains some level stabilizer in $G$. 
	
	$G$ has the \emph{$\mathcal{C}$-CSP modulo $M\unlhd G$} if every normal subgroup $N \trianglelefteq G$ satisfying   $G/N\in\mathcal{C}$ and $M\leq N$ also contains some level stabilizer in $G$.
	
\end{definition}

\subsection*{Independence of the weakly branch action}
A weakly branch group $G$ may have several different faithful weakly branched actions. 
However, they are all related to each other. 
It was shown in \cite{garrido:thesis} (see also \cite{garrido:csp}) that for any two weakly branch actions $\sigma:G\rightarrow \Aut T_{\sigma}$, $\rho:G\rightarrow \Aut T_{\rho}$ of a group $G$ the sets of respective level stabilizers are cofinal in each other. 
That is, for every $n\in \N$ there exists $m\in\N$ such that $\st_{\sigma}(n)\geq \st_{\rho}(m)$, and vice-versa.
This means that both filtrations define the same topology on $G$ when taken as neighbourhood bases of the identity. 
Thus, having the $\C$-CSP is independent of the weakly branch action of the group. 

The examples we consider in this paper are not only subgroups of $\Aut T$ where $T$ is the rooted $p$-adic tree, but of a Sylow pro-$p$-subgroup $\A$ of $\Aut T$, isomorphic to the infinite iterated wreath product of cyclic groups of order $p$. 
The above-mentioned results imply that if $\sigma:G\hookrightarrow \A$ is a weakly branched action
then any other weakly branched action $\rho:G \hookrightarrow \Aut T$, on, a priori, some arbitrary level-homogeneous rooted tree $T$, must actually have image in (a conjugate of)
$\A$.

\section{A criterion for a weakly regular branch group to have the $\mathcal{C}$-CSP}

We start with a simple but very useful result that will be used repeatedly. 

\begin{lemma}\label{lem:csp_transitivity}
	Let $G\in \Aut T$ and $N\unlhd M\unlhd G$.
	 If $G$ has the $\mathcal{C}$-CSP modulo $M$ and $M$ has the $\mathcal{C}$-CSP modulo $N$ then $G$ has the $\mathcal{C}$-CSP modulo $N$.
\end{lemma}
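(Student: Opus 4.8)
The plan is to take an arbitrary $P\unlhd G$ with $G/P\in\C$ and $N\le P$, and to produce a level stabiliser contained in $P$. It is convenient to reformulate the goal: writing $\overline{P}:=\bigcap_{n}\st_G(n)P$ for the congruence closure of $P$, the condition ``$P$ contains some $\st_G(n)$'' is equivalent to $\overline{P}=P$. Indeed, since $G/P$ is finite (elements of $\C$ being finite) the descending chain $\{\st_G(n)P\}_n$ stabilises, so $\overline{P}=P$ forces $\st_G(n)P=P$, i.e. $\st_G(n)\le P$, for all large $n$; the converse is clear.

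First I would feed the two hypotheses the two obvious subgroups. Since $M\unlhd G$ we have $PM\unlhd G$, with $M\le PM$ and $G/PM$ a quotient of $G/P\in\C$; hence the $\C$-CSP of $G$ modulo $M$ gives some $k$ with $\st_G(k)\le PM$. This already yields $\overline{P}\le\bigcap_n\st_G(n)PM=\overline{PM}=PM$, the last equality holding because $PM$ contains the level stabiliser $\st_G(k)$. On the other side, $P\cap M\unlhd M$ contains $N$ and $M/(P\cap M)\cong PM/P$ embeds in $G/P\in\C$, so lies in $\C$; the $\C$-CSP of $M$ modulo $N$ then gives some $l$ with $\st_M(l)=M\cap\st_G(l)\le P\cap M$, and moreover tells us that $P\cap M$ is congruence-closed in $M$, i.e. $\bigcap_m\st_M(m)(P\cap M)=P\cap M$.

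Now I would splice the two. Because $P\le\overline{P}\le PM$, Dedekind's modular law gives $\overline{P}=P(\overline{P}\cap M)$, so the whole problem reduces to the single inclusion $\overline{P}\cap M\le P$, equivalently $\overline{P}\cap M=P\cap M$. This intersection identity is the crux, and it is where the argument is genuinely delicate. The tempting shortcut --- taking $n=\max(k,l)$ and claiming $\st_G(n)\le P$ directly --- does not work: an element $g\in\st_G(n)\le PM$ written as $g=pm$ with $p\in P$, $m\in M$ need not have $m\in P$, since $\st_G(n)\cap PM$ need not equal $(\st_G(n)\cap P)(\st_G(n)\cap M)$. Already on a finite tree one can have $\st_G(n)$ a ``diagonal'' $C_2$ relative to $P$ and $M$, which breaks the claim at $n=\max(k,l)$ and is rescued only by passing to a deeper level. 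So one cannot work at a single level: the content is that $\overline{P}\cap M=\bigcap_m(\st_G(m)P\cap M)$ collapses to $P\cap M$.

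The main obstacle is precisely this collapse, and I would attack it by combining the two closedness statements above. Note $\overline{P}\cap M$ is normal in $M$, contains $N$, satisfies $M/(\overline{P}\cap M)\in\C$, and is closed in the congruence topology of $M$ (being the intersection of the congruence-closed subgroup $\overline{P}$ of $G$ with $M$, using $\st_M(m)=M\cap\st_G(m)$). Starting from $x\in\overline{P}\cap M$ and approximating it by elements of $P$ modulo ever-deeper $\st_G(m)$, I would use the congruence-closedness of $PM$ in $G$ to push the approximants into $M$, and then invoke the congruence-closedness of $P\cap M$ in $M$ (the $\C$-CSP of $M$ modulo $N$) to force $x\in P\cap M$. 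Once $\overline{P}\cap M\le P$ is established, $\overline{P}=P$ follows from the modular-law identity, and hence $P$ contains a level stabiliser. Carrying out this approximation cleanly --- ensuring the deep-level approximants genuinely land in $P\cap M$ rather than merely in $P$ --- is the step I expect to require the most care.
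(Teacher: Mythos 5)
Your setup is sound and you have correctly located the crux: the reduction via $\overline{P}=P(\overline{P}\cap M)$ is valid, the diagnosis that $n=\max(k,l)$ applied to $PM$ fails because $\st_G(n)\cap PM$ need not split as $(\st_G(n)\cap P)(\st_G(n)\cap M)$ is exactly right, and your "diagonal $C_2$" example is the correct obstruction. But the proof stops precisely where the work begins: the claimed collapse $\overline{P}\cap M\leq P$ is never established. The sketch --- "approximate $x$ by elements of $P$ modulo deep level stabilisers, push the approximants into $M$, then use congruence-closedness of $P\cap M$ in $M$" --- runs into the same obstruction you already identified one line earlier. An element $x\in\st_G(n)P\cap M$ gives $x=s_np_n$ with $p_n=s_n^{-1}x\in P\cap\st_G(n)M$, and to "push $p_n$ into $M$" you would need $P\cap\st_G(n)M=(P\cap\st_G(n))(P\cap M)$, which is the same non-splitting problem in a different coordinate; Dedekind's law does not apply since neither $\st_G(n)$ nor $M$ is contained in $P$. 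So with only the two instances of the hypotheses you invoke (the $\C$-CSP of $G$ modulo $M$ applied to $PM$, and the $\C$-CSP of $M$ modulo $N$ applied to $P\cap M$), the argument cannot be closed.

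The missing idea, which is the actual content of the paper's proof, is to apply the $\C$-CSP of $G$ modulo $M$ not to $PM$ but to the smaller normal subgroup $(\st_G(m)\cap P)M$, where $m$ is first chosen (via the $\C$-CSP of $M$ modulo $N$) so that $\st_M(m)=M\cap\st_G(m)\leq P\cap M$. This subgroup still contains $M$ and still has quotient in $\C$, so some $\st_G(l)$ lies inside it; and now, because $\st_G(m)\cap P\leq\st_G(m)$, Dedekind's law \emph{does} apply when you intersect with $\st_G(m)$: for $n=\max\{m,l\}$,
\begin{equation*}
\st_G(n)\;\leq\;(\st_G(m)\cap P)M\cap\st_G(m)\;=\;(\st_G(m)\cap P)(M\cap\st_G(m))\;\leq\;(\st_G(m)\cap P)(P\cap M)\;\leq\;P.
\end{equation*}
In other words, the "passage to a deeper level" that you correctly anticipate is implemented not by an approximation argument but by feeding a level-dependent subgroup into the first hypothesis, which kills the diagonal obstruction in one step. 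Without this (or an equivalent device) your proof has a genuine gap at its decisive step.
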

\begin{proof}
First note that (iii) of Lemma \ref{variety_properties} ensures that $\st_G(n)\cap M=\st_M(n)\nC M$ for every $n\in\N$ so that it makes sense for $M$ to have the $\C$-CSP. 

Now let $H\nC G$ be such that $H\geq N$. 
We have to prove that $H\geq\st_G(n)$ for some $n\in\N$. 
Since $M$ has the $\mathcal{C}$-CSP modulo $N$ and since  $H\cap M\nC M$, there is some $m\in\N$  such that $\st_M(m)\leq H\cap M$.
Now since $H,\st_G(m)\nC G$, we have $\st_G(m)\cap H\nC G$ and $(\st_G(m)\cap H)M\nC G$. 
Thus there is some $l\in\N$ such that $\st_G(l)\leq (\st_G(m)\cap H)M$.
	Taking $n:=\max \{m,l\}$, we have 
	$$\begin{aligned}
	\st_G(n) &= \st_G(l)\cap \st_G(m) \leq (\st_G(m)\cap H)M \cap \st_G(m)\\
	&=(\st_G(m)\cap H)(M\cap \st_G(m))\\
	&\leq (\st_G(m)\cap H)(H\cap M)\leq H,
	\end{aligned}$$
	where the second equality follows by the modular law.	
\end{proof}

Let us also record another extension property. 
\begin{lemma}\label{lem:csp-subnormal}
 Let $\C$ be an extension-closed pseudo-variety of finite groups (for instance, that of all finite $p$-groups).
 Let $G\leq\Aut T$ be a group with normal subgroups $M\leq H$ such that $H\nC G$. 
 If $G$ has the $\C$-CSP modulo $M$, then so does $H$. 
\end{lemma}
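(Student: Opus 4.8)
The plan is to show directly that $H$ has the $\C$-CSP modulo $M$: given any $N\unlhd H$ with $M\leq N$ and $H/N\in\C$, I must produce an $n$ with $\st_H(n)\leq N$. The obstruction is that $N$ is only normal in $H$, not in $G$, so I cannot feed it straight into the hypothesis that $G$ has the $\C$-CSP modulo $M$. The idea is therefore to replace $N$ by its normal core in $G$, check that this core still lies in $\C$ over $M$, and then apply the $\C$-CSP of $G$.

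First I would set $\tilde N:=\Core_G(N)=\bigcap_{g\in G}N^g$. Since $H\unlhd G$ and $N\unlhd H$, each conjugate $N^g$ is again a normal subgroup of $H$, and conjugation by $g$ is an isomorphism of $H$ carrying $N$ to $N^g$, so $H/N^g\cong H/N$ and in particular $[H:N^g]=[H:N]$ is finite. The number of distinct conjugates equals $[G:N_G(N)]\leq[G:H]$, which is finite because $H\nC G$; hence $\tilde N$ is a \emph{finite} intersection. I would then record the three facts I need about $\tilde N$: it is normal in $G$; it contains $M$, because $M\unlhd G$ gives $M=M^g\leq N^g$ for every $g$, so $M\leq\tilde N$; and $H/\tilde N\in\C$, because the natural map $H\to\prod_g H/N^g$ has kernel $\tilde N$, so $H/\tilde N$ embeds into a finite product of copies of $H/N\in\C$ and membership in $\C$ follows from $(\C_3)$ and $(\C_1)$.

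The crucial step, and the one that genuinely uses the extra hypothesis, is to upgrade $H/\tilde N\in\C$ to $G/\tilde N\in\C$. Here I would apply extension-closedness of $\C$ to the short exact sequence
$$1\longrightarrow H/\tilde N\longrightarrow G/\tilde N\longrightarrow G/H\longrightarrow 1,$$
whose outer terms lie in $\C$ by assumption, concluding that $\tilde N\nC G$. Now $G$ has the $\C$-CSP modulo $M$ and $M\leq\tilde N$, so there is some $n$ with $\st_G(n)\leq\tilde N\leq N$; intersecting with $H$ gives $\st_H(n)=\st_G(n)\cap H\leq N$, as required. I expect the extension-closed hypothesis to be the main (indeed essential) ingredient: it is exactly what allows membership in $\C$ to be transported from $H/\tilde N$ up to $G/\tilde N$, and without it the $\C$-CSP of $G$ could not be invoked. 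The only other point needing a little care is the finiteness of the set of conjugates, which guarantees that $\tilde N$ remains in $\C$ over $M$; this is immediate from $[G:H]<\infty$.
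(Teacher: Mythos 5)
Your proof is correct and follows essentially the same route as the paper's: pass to the normal core $\tilde N=\Core_G(N)$, note it is a finite intersection of conjugates each containing $M$ and of $\C$-index in $H$, use extension-closedness on $1\to H/\tilde N\to G/\tilde N\to G/H\to 1$ to get $\tilde N\nC G$, and then apply the $\C$-CSP of $G$ modulo $M$. Your write-up merely makes explicit the details (finiteness of the conjugate set, the embedding of $H/\tilde N$ into a finite product) that the paper leaves implicit.
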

\begin{proof}
 Consider $K\nC H$ with $K\geq M$. 
 Then each of the finitely many conjugates $K_i$ of $K$ by elements of $G$ also satisfy $M\leq K_i\nC H$, therefore so does their intersection, $N$, the normal core of $K$ in $G$. 
 Since $\C$ is closed under taking extensions, $N\nC G$ and therefore $N$ contains some level stabilizer of $G$. 
\end{proof}


{
	\renewcommand{\thetheorem}{\ref{thm:csp}}
	\begin{theorem}
	Let $G\leq \Aut T$ be a weakly regular branch group over a subgroup $R$ and let $\C$ be a pseudo-variety of finite groups. 
	Suppose that there exists $H\unlhd G$ such that $R\geq H\geq R'\geq L$ where $L:=\psi^{-1}(H\times \overset{d}{\dots}\times H)$.
	If $G$ has the $\mathcal{C}$-CSP modulo $H$ and $H$ has the $\mathcal{C}$-CSP modulo $L$, then $G$ has the $\mathcal{C}$-CSP.
	\end{theorem}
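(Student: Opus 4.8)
The plan is to interpolate between the two hypotheses by an infinite descending chain of subgroups and to descend it one step at a time with Lemma~\ref{lem:csp_transitivity}, closing the argument with the rigid-stabiliser lemma. Put $L_0:=H$, $L_1:=L$ and recursively $L_{n+1}:=\psi^{-1}(L_n\times\overset{d}{\dots}\times L_n)$, so that $L_n=\psi_n^{-1}(H\times\overset{d^n}{\dots}\times H)\le\st_G(n)$, whence $\bigcap_nL_n=1$. I would first check, using $H\unlhd G$ and the weakly regular branch structure, that $L_{n+1}\unlhd L_n\unlhd H$ for every $n$, so that each relative property below is meaningful and Lemma~\ref{variety_properties} applies. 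The aim is to prove that $G$ has the $\C$-CSP modulo $L_n$ for every $n$; the full $\C$-CSP then follows quickly.

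The main step is a recursion transporting the hypothesis down the chain: if $L_{n-1}$ has the $\C$-CSP modulo $L_n$, then $L_n$ has the $\C$-CSP modulo $L_{n+1}$. The isomorphism $\psi$ identifies $L_n=\psi^{-1}(L_{n-1}\times\overset{d}{\dots}\times L_{n-1})$ with the direct product $L_{n-1}\times\overset{d}{\dots}\times L_{n-1}$; under it $L_{n+1}$ becomes the subproduct $L_n\times\overset{d}{\dots}\times L_n$ and the level stabiliser $\st_{L_n}(j+1)$ becomes $\st_{L_{n-1}}(j)\times\overset{d}{\dots}\times\st_{L_{n-1}}(j)$. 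Given $K\nC L_n$ with $K\ge L_{n+1}$, its image $\bar K\nC L_{n-1}\times\overset{d}{\dots}\times L_{n-1}$ contains the subproduct, so for each $i$ the intersection $\bar K_i$ of $\bar K$ with the $i$-th factor is a normal subgroup of $L_{n-1}$ with quotient in $\C$ (by $(\C_1)$) containing $L_n$; by hypothesis $\bar K_i\ge\st_{L_{n-1}}(j_i)$. Since the factors commute, $\bar K\ge\st_{L_{n-1}}(j)\times\overset{d}{\dots}\times\st_{L_{n-1}}(j)$ for $j=\max_ij_i$, and pulling back through $\psi$ yields $\st_{L_n}(j+1)\le K$. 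The base case $n=1$ is precisely the hypothesis that $H$ has the $\C$-CSP modulo $L$.

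Assembling the pieces, I would iterate Lemma~\ref{lem:csp_transitivity} inside $H$: starting from ``$H$ has the $\C$-CSP modulo $L_1$'' and feeding in the recursion at each stage gives ``$H$ has the $\C$-CSP modulo $L_n$'' for every $n$. A single further application, now with $M=H\unlhd G$ and $N=L_n$, together with the hypothesis that $G$ has the $\C$-CSP modulo $H$, shows that $G$ has the $\C$-CSP modulo $L_n$ for every $n$. For the conclusion, take any $N\nC G$; as $G$ is infinite, $N\ne1$, so Lemma~\ref{lemma:rist'} gives $m$ with $N\ge\rst_G(m)'$. Since $G$ is weakly regular branch over $R$, $\psi_m(\rst_G(m))\ge R\times\overset{d^m}{\dots}\times R$, so $\rst_G(m)'\ge\psi_m^{-1}(R'\times\overset{d^m}{\dots}\times R')\ge\psi_m^{-1}(L\times\overset{d^m}{\dots}\times L)=L_{m+1}$, using $R'\ge L$. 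Hence $N\ge L_{m+1}$, and as $G$ has the $\C$-CSP modulo $L_{m+1}$, $N$ contains a level stabiliser.

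I expect the recursion of the second paragraph to be the main obstacle: one must transport the \emph{relative} congruence property faithfully through $\psi$ and the $d$-fold direct product while matching level stabilisers across the shift $j\mapsto j+1$, and one must secure the normalities $L_{n+1}\unlhd L_n\unlhd H$ that make the iterated transitivity legitimate. By contrast, the final descent is a clean application of Lemma~\ref{lemma:rist'} and the defining inclusion $\psi_m(\rst_G(m))\ge R\times\overset{d^m}{\dots}\times R$, which is exactly where the weakly regular branch hypothesis and the assumption $R'\ge L$ are used.
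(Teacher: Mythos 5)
Your proposal is correct and takes essentially the same route as the paper: the same chain $L_n=\psi_n^{-1}(H\times\overset{d^n}{\dots}\times H)$, the same iteration of Lemma~\ref{lem:csp_transitivity}, the same final descent via Lemma~\ref{lemma:rist'} and $R'\geq L$. The only (immaterial) organizational difference is that you prove ``$L_n$ has the $\C$-CSP modulo $L_{n+1}$'' by a one-level recursion on $d$ copies of $L_{n-1}$, whereas the paper flattens $L_n$ into $d^n$ copies of $H$ and applies the hypothesis on $H$ coordinate-wise in one step.
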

	\addtocounter{theorem}{-1}
}

\begin{proof}
Put $L_0:=H$, $L_1:=L=\psi^{-1}(H\times \overset{d}{\dots}\times H)\leq R'$ and
$$L_n:=\psi_{n}^{-1}(H\times\overset{d^{n}}{\dots}\times H)\leq \psi_{n-1}^{-1}(R'\times\overset{d^{n-1}}{\dots\times}R')\leq \rst_G(n-1)'$$
for $n\in\N, n\geq 2$.	

We will show by induction on $n$ that $G$ has the $\mathcal{C}$-CSP modulo $L_n$ for each $n\in\N$. 
Then, as $G$ is weakly regular branch, it is in particular transitive on all levels of $T$, so by Lemma \ref{lemma:rist'},
for each  non-trivial $N\unlhd G$ there exists $n\in \N$ such that $N\geq\rst_G(n)'\geq L_{n+1}$, whence the result follows. 

There is nothing to show for the base case as we have assumed that $G$ has the $\mathcal{C}$-CSP modulo $H$.
It will suffice to show that $L_n$ has the $\mathcal{C}$-CSP modulo $L_{n+1}$ for all $n\in\N$ and then inductively apply Lemma \ref{lem:csp_transitivity}.

Fix $n\in\N$ and let $L_{n+1}\leq N\nC L_n$. 
Then 
$$L\times\overset{d^n}{\dots}\times L\leq \psi_n(N)\nC H\times \overset{d^n}{\dots}\times H.$$
For $i=1,\dots,d^n$, denote by $H_i$ the $i$th coordinate subgroup in $\psi(L_n)$ and similarly for $L_i$. 
Then $N_i:=N\cap H_i\nC H_i$ and $N_i\geq L_i$, so, since $H$ has the $\C$-CSP modulo $L$, there is some $m_i\in\N$ such that $\st_H(m_i)\leq N_i$. 
Taking the maximum, $m$, of the $m_i$, we obtain
$$\st_H(m)\times\overset{d^n}{\dots}\times\st_H(m)\leq\psi_n(N). $$
Thus
$$\st_{L_n}(m+n)=\psi_{n}^{-1}(\st_H(m)\times\overset{d^{n}}{\dots}\times \st_H(m))\leq N$$
as required.
%
\end{proof}

\begin{corollary}
If $\C$ is also extension-closed and $H\nC G$, then Lemma \ref{lem:csp-subnormal} shows that the condition in Theorem \ref{thm:csp} is also necessary for $G$ to have the $\C$-CSP.
\end{corollary}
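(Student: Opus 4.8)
The plan is to assume that $G$ has the $\C$-CSP and to deduce from this the two hypotheses of Theorem \ref{thm:csp}, namely that $G$ has the $\C$-CSP modulo $H$ and that $H$ has the $\C$-CSP modulo $L$. The first of these is immediate from the definitions: having the $\C$-CSP means that every $N\nC G$ contains some level stabilizer, and this holds in particular for those $N$ with $H\leq N$, which is exactly the statement that $G$ has the $\C$-CSP modulo $H$. For the same reason, the $\C$-CSP implies the $\C$-CSP modulo $M$ for \emph{every} normal subgroup $M\unlhd G$; I will use this remark again in a moment with $M=L$.

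The real content is the second hypothesis, and here the tool is Lemma \ref{lem:csp-subnormal}. I would apply it with $M:=L$, so I must verify its hypotheses: that $L$ and $H$ are normal subgroups of $G$ with $L\leq H$, that $H\nC G$, that $\C$ is extension-closed, and that $G$ has the $\C$-CSP modulo $L$. The containment $L\leq H$ holds since $L\leq R'\leq H$; the conditions $H\nC G$ and that $\C$ is extension-closed are exactly the extra hypotheses in the corollary; and $G$ having the $\C$-CSP modulo $L$ follows from the remark above applied to $M=L$. The one point that requires genuine care is the normality $L\unlhd G$, without which ``$G$ has the $\C$-CSP modulo $L$'' would not even be meaningful and the lemma would not apply. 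This normality is already used implicitly in the proof of Theorem \ref{thm:csp}, where Lemma \ref{lem:csp_transitivity} is chained along the subgroups $L_n$ (its application requires each $L_n\unlhd G$); it follows from $H\unlhd G$ together with the weakly regular branch structure, since $G$ permutes the level-one subtrees and normalises the coordinate copies of $H$ inside $\psi(\st_G(1))$. Granting this, Lemma \ref{lem:csp-subnormal} yields that $H$ has the $\C$-CSP modulo $L$, which is precisely the remaining hypothesis of Theorem \ref{thm:csp}.

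Combining the two parts shows that, whenever $\C$ is extension-closed and $H\nC G$, the $\C$-CSP of $G$ forces both hypotheses of Theorem \ref{thm:csp} to hold; that is, the condition of the theorem is necessary. I do not anticipate a serious obstacle: the argument is essentially a single invocation of Lemma \ref{lem:csp-subnormal} layered on top of the trivial monotonicity of the ``$\C$-CSP modulo $M$'' notion, and the only non-formal ingredient is the normality $L\unlhd G$, which I would record once and then use freely.
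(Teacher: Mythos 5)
Your proposal is correct and is essentially the paper's own argument: the corollary is proved there simply by invoking Lemma \ref{lem:csp-subnormal} with $M=L$ (using that the full $\C$-CSP trivially gives the $\C$-CSP modulo any normal subgroup, in particular modulo $H$ and modulo $L$), which is exactly your route. Your extra remark about needing $L\unlhd G$ is a fair point of care, but this normality is already implicitly assumed in the statement and proof of Theorem \ref{thm:csp} (e.g.\ for ``$H$ has the $\C$-CSP modulo $L$'' to be well-posed), so it introduces no new burden here.
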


\section{Examples: the $p$-CSP}
We consider two types of weakly branch groups, one for $p$ odd (the GGS-groups with constant defining vector studied in \cite{bartholdi-grigorchuk:parabolic,alcober-zugadi:hausdorff,alcober-garrido-uria:GGS}) and another for $p=2$ (the Basilica group, studied in \cite{grigorchuk-zuk:torsion-free}).
These examples are subgroups of a Sylow pro-$p$ group of $\Aut T$, so the quotients by level stabilizers are finite $p$-groups. 
It is therefore sensible to consider the $\C$-CSP for $\C$ a pseudo-variety consisting of finite $p$-groups. 
In this section we focus on the pseudo-variety of all finite $p$-groups and will therefore talk about the $p$-CSP. 

\subsection{Example: the GGS-groups with constant defining vector.}
Let $p$ be an odd prime and let $\G=\langle a, b\rangle \leq\Aut T$ be the GGS-group with constant defining vector. 
That is, $a$ cyclically permutes the vertices of the first level as the permutation $(1\; 2\; \dots \; p)$ and $b=(a,a,\dots, a, b)$ acts as $a$ on the first $p-1$ subtrees rooted at the first level.
Let $K=\langle ba^{-1}\rangle^{\G}$.
It was shown in \cite{alcober-garrido-uria:GGS} that $\G$ does not have the CSP, because it virtually maps onto $\Z$ and therefore has many finite quotients that are not $p$-groups.
We show here that it does have the $p$-CSP.

This automatically gives us the answers for the cases of pseudo-varieties of solvable and nilpotent groups.
For the pseudo-variety $\mathcal{C}_s$ of finite solvable groups, $\mathcal{G}$ will not have the $\mathcal{C}_s$-CSP
because $\mathcal{G}/K\cong (\Z\times\overset{p-1}{\dots}\times \Z) \rtimes C_p$ and so $\mathcal{G}$ has quotients that are solvable but not of $p$-power index.

On the other hand, for $\mathcal{C}_n$, the family of finite nilpotent groups, $\mathcal{G}$ has the $\mathcal{C}_n$-CSP. 
Suppose $N\unlhd G$ such that $G/N$ is nilpotent.
Then $N\geq \gamma_i(\mathcal{G})$ for some $i\in\N$. 
Since $\mathcal{G}/\mathcal{G}'$ is finite of exponent $p$, each quotient $\gamma_i(\mathcal{G})/\gamma_{i+1}(\mathcal{G})$ is also finite of exponent $p$. 
Thus, each $\gamma_i(\mathcal{G})$ is of finite index in $\mathcal{G}$ and moreover of index a power of $p$.
If $\mathcal{G}$ has the $p$-CSP then, in particular, each $\gamma_i(\mathcal{G})$ contains some level stabilizer, and thus $\mathcal{G}$ has the $\mathcal{C}_n$-CSP.

\begin{proposition}\label{prop:rists}
	For each $n\in \N$, the $n$th rigid stabilizer satisfies $\psi_n(\rst_{\G}(n))=K'\times \overset{p^n}{\dots}\times K'$.	
\end{proposition}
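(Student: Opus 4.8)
The plan is to prove the two inclusions separately and to reduce the whole statement to the case $n=1$. For the inclusion $\psi_n(\rst_{\G}(n))\geq K'\times\overset{p^n}{\dots}\times K'$ I would use that $\G$ is weakly regular branch over $K'$, that is $\psi(K')\geq K'\times\overset{p}{\dots}\times K'$ (established in \cite{alcober-garrido-uria:GGS}); as recorded in Section~\ref{section:definitions}, this at once gives $\psi_n(\rst_{\G}(n))\geq K'\times\overset{p^n}{\dots}\times K'$ for all $n$. For the reverse inclusion it suffices to treat $n=1$. Indeed $\G$ is self-similar, so the sections of any element of $\G$ again lie in $\G$; and if $g\in\rst_{\G}(n)$ with $n\geq2$ then $g\in\st_{\G}(n-1)$ and, writing $\psi_{n-1}(g)=(g_1,\dots,g_{p^{n-1}})$, each $g_i\in\G$ is rigidly supported on the first level of the subtree on which it acts, hence $g_i\in\rst_{\G}(1)$. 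Applying the case $n=1$ to each $\psi(g_i)$ and using $\psi_n=(\psi\times\overset{p^{n-1}}{\dots}\times\psi)\circ\psi_{n-1}$ then yields $\psi_n(\rst_{\G}(n))\leq K'\times\overset{p^n}{\dots}\times K'$. Everything therefore reduces to the equality $\psi(\rst_{\G}(1))=K'\times\overset{p}{\dots}\times K'$.

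For this base case the lower bound is once more the weakly regular branch property, so the work is in the upper bound. First I would describe $\psi(\st_{\G}(1))$: since $a$ has order $p$ and cyclically permutes the first level while $b\in\st_{\G}(1)$, the stabiliser $\st_{\G}(1)$ is generated by $b,b^{a},\dots,b^{a^{p-1}}$, whose $\psi$-images are the $p$ cyclic shifts of $(a,\dots,a,b)$. Conjugation by $a$ permutes coordinates, so $\psi(\rst_{\G}(1))=U\times\overset{p}{\dots}\times U$ where $U=\{x\in\G:(x,1,\dots,1)\in\psi(\st_{\G}(1))\}$, and it remains to identify $U$. Projecting each coordinate onto $\G/\G'\cong C_p\times C_p$, the demand that coordinates $2,\dots,p$ be trivial forces all generator exponents to vanish modulo $p$, and a short computation then makes the first coordinate trivial in $\G/\G'$ as well; hence $U\leq\G'$.

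The main obstacle is the final sharpening $U\leq K'$: the abelianisation argument only confines $U$ to $\G'$, and it is precisely here that the constant defining vector must be exploited. My plan is to compute $\psi(\st_{\G}(1))\cap(\G\times 1\times\overset{p-1}{\dots}\times 1)$ exactly, using that the commutators $[b^{a^i},b^{a^j}]$ have $\psi$-images supported on only two coordinates (with entries $[a,b]^{\pm1}$) and combining these with the containment $\psi(K')\geq K'\times\overset{p}{\dots}\times K'$, so as to check that the only first coordinates attainable while trivialising the others lie in $[K,K]=K'$. For the detailed relations governing $K$ and $\st_{\G}(1)$ I would rely on the computations in \cite{alcober-garrido-uria:GGS}. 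Once the section group of $\rst_{\G}(1)$ is identified with $K'$, the base case $\psi(\rst_{\G}(1))=K'\times\overset{p}{\dots}\times K'$, and with it the proposition, follow.
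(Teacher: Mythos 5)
Your overall architecture coincides with the paper's: the lower bound is exactly the weakly regular branch property over $K'$, and the reduction of the upper bound to $n=1$ via $\psi(\rst_{\G}(n))\leq \rst_{\G}(n-1)\times\overset{p}{\dots}\times\rst_{\G}(n-1)$ (self-similarity of $\G$) is the same induction the paper performs. Your exponent computation in the base case is also essentially the paper's: writing $g=b^{i_0}(b^a)^{i_1}\cdots(b^{a^{p-1}})^{i_{p-1}}t$ with $t\in\st_{\G}(1)'$ and reading off the coordinates modulo $\G'$ shows that any $g$ with $\psi(g)=(h,1,\dots,1)$ lies in $\st_{\G}(1)'$, hence in $K$.

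Where you diverge is in how the base case is finished, and this is where your proposal has a genuine gap. The paper does not prove directly that the attainable first coordinates lie in $K'$; it observes that $\rst_{\G}(x)\leq\st_{\G}(1)'\leq K$ forces $\rst_{\G}(1)=\rst_K(1)=\rst_{\G'}(1)$ (using $\G'=\st_{\G}(1)\cap K$), and then quotes $\psi(\rst_{\G'}(1))=K'\times\overset{p}{\dots}\times K'$ from the proof of Theorem 3.7 of \cite{alcober-garrido-uria:GGS}. You instead propose to establish the sharpening $U\leq K'$ by hand, and at precisely this point you state an intention (``check that the only first coordinates attainable while trivialising the others lie in $K'$'') rather than an argument. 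This is not a routine verification: by Proposition \ref{uniserial}, $\G/K'\cong\Z[\theta]\rtimes C_p$ is infinite and $\G'/K'$ is a finite-index subgroup of $\Z[\theta]\cong\Z^{p-1}$, so passing from $U\leq\G'$ to $U\leq K'$ collapses an infinite quotient and is the entire content of the hard inclusion. Your observation that the generators $[b^{a^i},b^{a^j}]$ of $\st_{\G}(1)'$ have two-coordinate support with entries $[a,b]^{\pm1}$ is a correct starting point, but one must then control arbitrary products of $\G$-conjugates of these modulo $K'\times\overset{p}{\dots}\times K'$, which amounts to reproving the cited result. Either carry that computation out in full or, as the paper does, reduce to $\rst_{\G'}(1)$ and cite it.
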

\begin{proof}
We know from \cite{alcober-zugadi:hausdorff} that $\G$ is weakly regular branch over $K'$.
This means that $\psi_n(\rst_G(n))\geq K'\times\overset{p^n}{\dots} \times K'$ for all $n$.
Now if we prove the statement for $n=1$, since $\psi(\rst_G(2))\leq \rst_G(1)\times\dots\times\rst_G(1)$ we get 
$\psi_2(\rst_G(2))\leq K'\times\overset{p^2}{\dots}\times K'$ and inductively the same for the rest of the levels. 
By the proof of Theorem 3.7 of \cite{alcober-garrido-uria:GGS}, we have $\psi(\rst_{\G'}(1))=K'\times \overset{p}{\dots}\times K'$.
	We need only prove that $K\geq \rst_{\G}(1)$, since then
	$\rst_{\G}(1)=\rst_{K}(1)=\rst_{\G'}(1)$, where the latter equality holds because $\G'=\st(1)\cap K$.
	We will in fact show the stronger statement $\st_{\G}(1)'\geq \rst_{\G}(x)$ for some $x\in X$, (and therefore for all $x\in X$, as $\st_{\G}(1)$ is normal in $G$, which acts transitively on $X$) from which the claim follows as $K \geq \st_{\G}(1)'$. 
	Suppose that there is some $g$ such that $\psi(g)=(h,1,\dots,1)\in\rst_{\G}(x)\setminus \st_{\G}(1)'$.
	Then we can write 
	$$g=b^{i_0}(b^a)^{i_1}\dots (b^{a^{p-1}})^{i_{p-1}}t$$ where $t\in \st_{\G}(1)'. $
	Now 
	$$\psi(g)=(a^*b^{i_1}a^*t_1,\dots, a^*b^{i_0}a^*t_n)=(h,1,\dots,1),$$
	where $t_i\in G'$ for $i=1,\dots,p$ and the $*$ denote unimportant exponents.
	Then, necessarily, $i_j=0$ for $j\neq 1$, and consequently $$\psi(g)=(b^{i_1}t_1,a^{i_1}t_2,\dots,a^{i_1}t_{p-1})=(h,1,\dots,1),$$
	implies that also $i_1=0$.
	Thus $g\in\st_{\G}(1)'$, as required.
\end{proof}

For the following, see \cite[Proposition 3.4]{alcober-garrido-uria:GGS} and \cite[Example 7.4.14, Section 8.2]{leedham-green-mckay:p-power}.
\begin{proposition}\label{uniserial}
	The quotient $\G/K'$ is isomorphic to the integral uniserial space group $\Z[\theta]\rtimes C_p$ where $\theta$ is a primitive $p$th root of unity and the generator of $C_p$ acts by multiplication by $\theta$. 
	In particular, each normal subgroup of $p$-power index in $\G/K'$ is precisely $\gamma_i(\G)K'/K'$ for some $i\in\N$.
\end{proposition}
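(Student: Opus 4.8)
The plan is to handle the two assertions in turn: first pin down the isomorphism type of $\G/K'$, and then read off its normal subgroups of $p$-power index from the uniserial structure. For the isomorphism I would start by locating $K/K'$ as a normal abelian subgroup of $\G/K'$ with quotient $\G/K\cong C_p=\langle\bar a\rangle$. Writing $t:=ba^{-1}$, so that $K=\langle t\rangle^{\G}$, the key structural observation is that since $b\equiv a\pmod K$ and $K$ acts trivially by conjugation on its own abelianisation, the action of $\G$ on $K/K'$ factors through $\langle a\rangle\cong C_p$. Thus $K/K'$ is a module over $\Z[C_p]=\Z[x]/(x^p-1)$, and it is \emph{cyclic}, generated by the image of $t$, because the $\G$-conjugates of $t$ reduce modulo $K'$ to its $\langle a\rangle$-conjugates. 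Hence $K/K'\cong\Z[x]/\mathrm{Ann}(t)$.

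The heart of the first part is to show that $\mathrm{Ann}(t)$ is the ideal generated by the norm element $1+x+\dots+x^{p-1}=\Phi_p(x)$, which gives $K/K'\cong\Z[x]/\Phi_p(x)=\Z[\theta]$ with $\bar a$ acting as multiplication by $\theta$. This splits into showing that $P:=t\,t^{a}\cdots t^{a^{p-1}}$ lies in $K'$ and that $K/K'$ is torsion-free of rank exactly $p-1$, so that no further relations intervene. For the first I would use $\psi$: each factor $t^{a^i}$ acts on the first level as the inverse of the rooted cyclic permutation, so $P\in\st_{\G}(1)$; I would then compute $\psi(P)$ from $\psi(b)=(a,\dots,a,b)$ and check it lies in $K'\times\overset{p}{\dots}\times K'=\psi(\rst_{\G'}(1))$ (Proposition~\ref{prop:rists}), which forces $P\in\rst_{\G'}(1)\leq K'$ since $\G$ is weakly regular branch over $K'$. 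The torsion-freeness and the rank are exactly what is recorded in \cite[Proposition~3.4]{alcober-garrido-uria:GGS}. Together these give $\G/K'=(K/K')\rtimes\langle\bar a\rangle\cong\Z[\theta]\rtimes C_p$.

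For the second assertion I would identify this semidirect product with the integral uniserial space group of \cite[Example~7.4.14]{leedham-green-mckay:p-power}. Completing at $p$, the translation module becomes the valuation ring $\Z_p[\theta]$ of the totally ramified extension $\mathbb{Q}_p(\theta)/\mathbb{Q}_p$, whose maximal ideal is generated by the uniformiser $\theta-1$; as $\theta-1$ also generates the augmentation ideal of $\Z_p[C_p]$, the module is uniserial. I would then invoke the uniserial property from \cite[Section~8.2]{leedham-green-mckay:p-power}: the lower central series descends in steps of order $p$, with $\gamma_{i+1}(\Z[\theta]\rtimes C_p)=(\theta-1)^{i}\Z[\theta]$ for $i\geq1$, and these terms exhaust the normal subgroups of $p$-power index. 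Pulling back along $\gamma_i(\G/K')=\gamma_i(\G)K'/K'$ then gives the stated description.

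The main obstacle is the explicit determination of $\mathrm{Ann}(t)$, and concretely the verification that $P=t\,t^{a}\cdots t^{a^{p-1}}\in K'$: this is the one genuinely computational point and requires carefully unwinding the wreath recursion together with the identity $\psi(\rst_{\G'}(1))=K'\times\overset{p}{\dots}\times K'$. The remaining ingredients are either formal module theory or can be cited wholesale from \cite{alcober-garrido-uria:GGS} and \cite{leedham-green-mckay:p-power}.
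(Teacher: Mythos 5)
The first thing to note is that the paper does not prove this proposition at all: it is imported wholesale from \cite[Proposition 3.4]{alcober-garrido-uria:GGS} and \cite[Example 7.4.14, Section 8.2]{leedham-green-mckay:p-power}. Your overall architecture is the natural one and matches what happens in those sources: view $K/K'$ as a cyclic $\Z[C_p]$-module generated by the image of $t=ba^{-1}$, show that its annihilator is generated by $1+x+\dots+x^{p-1}$, and then deduce the second assertion from the uniserial structure of $\Z[\theta]\rtimes C_p$. The module-theoretic bookkeeping and the appeal to uniseriality are fine (though the torsion-freeness and rank of $K/K'$ are exactly what you would be re-citing from the reference you are meant to be proving).

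The problem is the step you yourself flag as the one genuinely computational point: the verification that $P=t\,t^a\cdots t^{a^{p-1}}\in K'$. Your proposed method is to check that $\psi(P)\in K'\times\overset{p}{\dots}\times K'=\psi(\rst_{\G}(1))$, but this check comes out \emph{negative}. Telescoping the product gives $P=(ba^{-2})^p=\prod_{j=0}^{p-1}b^{a^{2j}}$, and since $2$ is invertible mod $p$ this runs over all $p$ first-level conjugates of $b$; hence each coordinate of $\psi(P)$ is a product of $p-1$ copies of $a$ and a single copy of $b$ (in some order), so it is congruent to $a^{-1}b\equiv ba^{-1}\not\equiv 1$ modulo $\G'$. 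For $p=3$ one gets explicitly $\psi(P)=(aba,\,a^2b,\,ba^2)$. Thus $\psi(P)$ does not lie in $\G'\times\overset{p}{\dots}\times\G'$, let alone in $K'\times\overset{p}{\dots}\times K'$, and $P\notin\rst_{\G}(1)$. The conceptual point you are missing is that $K'=K_1$ is strictly (indeed infinitely) larger than $\rst_{\G}(1)=K_2=\psi^{-1}(K'\times\overset{p}{\dots}\times K')$: by Lemma \ref{maps} the quotient $K_1/K_2$ is isomorphic to $K/K_1\times\overset{p-2}{\dots}\times K/K_1\cong\Z^{(p-1)(p-2)}$ (and to $K/K'\cong\Z^2$ when $p=3$). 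So membership of $P$ in $K'$ cannot be certified coordinatewise through $\psi$; one must work modulo $K'$ itself, e.g.\ via the section map $S$ of Lemma \ref{maps} or the commutator computations of \cite{alcober-zugadi:hausdorff,alcober-garrido-uria:GGS}. As written, the central step of your argument fails, even though the statement $P\in K'$ (equivalently, $\operatorname{Ann}(t)=(1+x+\dots+x^{p-1})$) is true.
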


\begin{corollary}\label{csp_mod_K'}
The groups $\mathcal{G}$  and $K$ have the $p$-CSP modulo $K'$.
\end{corollary}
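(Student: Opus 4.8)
The plan is to reduce both assertions to a single cofinality statement and to use Propositions~\ref{uniserial} and \ref{prop:rists} as the two essential inputs. Consider $\G$ first. If $N\unlhd\G$ satisfies $\G/N\in\C$ (a finite $p$-group) and $K'\leq N$, then $\G/N$ is nilpotent, so $\gamma_i(\G)\leq N$ — and hence $\gamma_i(\G)K'\leq N$ — for some $i$. By Proposition~\ref{uniserial} the subgroups $\gamma_i(\G)K'$ form a descending chain with $\bigcap_i\gamma_i(\G)K'=K'$, so it suffices to show that \emph{each} $\gamma_i(\G)K'$ contains a level stabilizer of $\G$. For $K$ the situation is parallel: one checks that $K$ is itself weakly regular branch over $K'$ with $\rst_K(n)=\rst_\G(n)$ (this is implicit in the proof of Proposition~\ref{prop:rists}, where $\rst_\G(1)=\rst_K(1)$ and $\psi(\rst_K(1))=K'\times\dots\times K'$), and since $K/K'$ is abelian the $p$-power-index normal subgroups of $K$ containing $K'$ are simply the finite-$p$-index subgroups of the abelian group $K/K'$; again it suffices that each contains $\st_K(n)=K\cap\st_\G(n)$ for some $n$. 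Note that one cannot instead obtain the statement for $K$ from that for $\G$ via Lemma~\ref{lem:csp-subnormal}, because $\G/K\cong(\Z\times\dots\times\Z)\rtimes C_p$ is infinite, so $K$ is not $\nC\G$.

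The core of the argument is the resulting cofinality claim: for each $p$-power-index normal $M\geq K'$ (either $M=\gamma_i(\G)K'$ in $\G$, or a finite-$p$-index subgroup of $K$ containing $K'$) there is an $n$ with $\st(n)\leq M$, equivalently $\bigcap_n\st(n)K'=K'$; that is, $K'$ is closed in the congruence topology. Here Proposition~\ref{prop:rists} is the key input. Since $\rst(n)=\psi_n^{-1}(K'\times\dots\times K')\leq K'$, the map $\psi_n$ induces an embedding of $\st(n)/\rst(n)$ into $(G/K')^{p^n}$, so that the rigid contribution to $\st(n)$ vanishes modulo $K'$ and only the diagonal sections survive. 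I would then compare the two filtrations level by level: using that $\psi(\st(1))$ has its coordinates in $\st(\cdot)$ one level down, push an element of $\bigcap_n\st(n)K'$ through $\psi$ and track its sections modulo $K'$, invoking Proposition~\ref{uniserial} to control how far down the lower central (equivalently, maximal-class) filtration of $G/K'$ those diagonal sections can sit. Matching the index $[\G:\gamma_i(\G)K']=p^i$ against the $p$-power index of $\st(n)K'$ should force the congruence images to descend arbitrarily far in the chain, yielding $\bigcap_n\st(n)K'=K'$.

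The main obstacle is precisely this last step. The difficulty is genuine rather than bookkeeping: the diagonal sections of $K'$ are exactly what makes $G/K'$ infinite, so the coordinates of an element of $K'$ need not again lie in $K'$, and a naive coordinate-wise induction fails. Nor can one finesse it by residual finiteness, since $K'$ being closed in the \emph{congruence} topology (as opposed to the full profinite topology) is the very content of the $p$-CSP modulo $K'$ and is where any failure of the congruence property would hide. It is the interaction of the branch structure (Proposition~\ref{prop:rists}, which annihilates the rigid part modulo $K'$) with the rigid uniserial structure of $G/K'$ (Proposition~\ref{uniserial}, which leaves the diagonal sections no room to stabilise above $K'$) that closes the gap, and carrying out this comparison carefully is the crux of the proof.
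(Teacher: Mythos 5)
There is a genuine gap, and it sits exactly where you say it does. Your reduction for $\G$ is fine as far as it goes: by Proposition~\ref{uniserial} any $N\np\G$ with $N\geq K'$ equals $\gamma_i(\G)K'$ for some $i$, so everything hinges on showing that the congruence filtration $\st_\G(n)K'$ descends arbitrarily far down the chain $\{\gamma_i(\G)K'\}_i$. But this is precisely the step you describe as a plan (``push an element through $\psi$, track diagonal sections, match indices'') and then explicitly defer as ``the crux of the proof''. The paper does not derive this from Propositions~\ref{prop:rists} and \ref{uniserial} by such a section-tracking argument at all: it quotes \cite[Theorem 4.6]{alcober-zugadi:hausdorff}, which states that $\G/K'\st_\G(n)$ is of maximal class and order $p^{n+1}$; this immediately gives $\st_\G(n)K'=\gamma_n(\G)K'$ and the claim for $\G$ follows from Proposition~\ref{uniserial} in one line. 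Without that input (or a completed version of your comparison argument) there is no proof, only a correct reformulation of what must be shown.

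Two further points. First, your reason for rejecting Lemma~\ref{lem:csp-subnormal} for the second claim is wrong: you have confused $\G/K$ with $\G/K'$. Since $K=\langle ba^{-1}\rangle^{\G}$ and $a^p=1$, the quotient $\G/K$ is cyclic of order $p$ (the paper uses $\G/K\cong C_3$ explicitly in the proof of Proposition~\ref{gamma3_K}; the infinite group $(\Z\times\dots\times\Z)\rtimes C_p$ is $\G/K'$). Hence $K\np\G$, the pseudo-variety of finite $p$-groups is extension-closed, and the statement for $K$ follows from the statement for $\G$ by Lemma~\ref{lem:csp-subnormal} exactly as the paper does. Second, your substitute route for $K$ relies on the claimed equivalence between ``every finite-$p$-index subgroup of $K$ containing $K'$ contains some $\st_K(n)$'' and ``$\bigcap_n\st_K(n)K'=K'$''. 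That equivalence fails here: $K/K'\cong\Z^{p-1}$ with $p-1\geq 2$, its finite-index subgroups are not totally ordered, and a descending chain of finite-index subgroups of $\Z^{m}$ ($m\geq 2$) can have trivial intersection without being cofinal in all finite-index subgroups. So even granting the cofinality claim for $\G$, your argument for $K$ would not close.
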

\begin{proof}
In \cite[Theorem 4.6]{alcober-zugadi:hausdorff} it is proved that $G/K'\st_G(n)$ is of maximal class and order $p^{n+1}$ for every $n\in\N$. 
Thus $\st_G(n)K'=\gamma_n(G)K'$ for every $n\in\N$ and by Proposition \ref{uniserial} the first claim follows.
The second claim follows by Lemma \ref{lem:csp-subnormal}.
\end{proof}


Define $K_1:=K', K_2:=\psi^{-1}(K'\times\overset{p}{\dots}\times K')=\rst_G(1)$.
	Consider the following maps:
	$$\begin{array}{rccc} 
	S \colon & \st_{\G}(1)   &   \to   &   G/K_1\times \overset{p-2}{\dots}\times G/K_1 \\
	 & g=(g_1,\dots,g_p) & \mapsto & (g_1K_1,\dots,g_{p-2}K_1)
	\end{array},$$
	and for $n\geq 3$, 
	$$\begin{array}{rccc}
	\pi_n \colon & K/K_1\times \overset{p-2}{\dots}\times K/K_1     &     \to    &     K/\st_{\G}(n)K_1\times \overset{p-2}{\dots}\times K/\st_{\G}(n)K_1\\
	 & (g_1K_1,\dots,g_{p-2}K_1) & \mapsto   &  (g_1\st_{\G}(n)K_1,\dots,g_{p-2}\st_{\G}(n)K_1)
	\end{array},$$
	%
	%
	Observe that $\ker\pi_n=\st_{\G}(n)K_1/K_1\times \overset{p-2}{\dots}\times \st_{\G}(n)K_1/K_1$. Then we have the following properties, which can be extracted from the proof of Theorem 4.5 in \cite{alcober-zugadi:hausdorff}.

\begin{lemma}\label{maps}
	With the above notation,
	\begin{enumerate}
		\item the map $S$ restricted to $K_1$ has kernel $K_2$ and image $K/K_1\times \overset{p-2}{\dots}\times K/K_1$, 
		\item the kernel of  the composition $S_n:=\pi_n\circ S$ is $(\st_{\G}(n+1)\cap K_1)K_2$.
	\end{enumerate}
\end{lemma}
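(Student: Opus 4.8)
The plan is to deduce both statements from a single computation of the sections of elements of $K'$ modulo $K'$. Since $\G$ is self-similar, composing $\psi$ with the coordinate-wise projection $\G\to\G/K'$ gives a homomorphism $\bar\psi\colon\G\to(\G/K')\wr C_p$. Its restriction to $\st_\G(1)$ lands in the base group and defines a homomorphism $\phi\colon\st_\G(1)\to(\G/K')^{p}$, $g\mapsto(g_1K',\dots,g_pK')$, with $\ker\phi=\psi^{-1}(K'\times\overset{p}{\dots}\times K')=K_2$ by Proposition \ref{prop:rists}. Note $K'\leq\G'=\st_\G(1)\cap K\leq\st_\G(1)$, so $S|_{K'}$ makes sense, and by construction $S$ is $\phi$ followed by projection onto the first $p-2$ coordinates, while $S_n$ is $\phi$ followed by reduction modulo $\st_\G(n)K'$ in those first $p-2$ coordinates. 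Thus both parts reduce to understanding the subgroup $\phi(K')\leq(\G/K')^{p}$.

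For part (i) I would compute $\phi(K')$ explicitly. Writing $c:=ba^{-1}$ and using $\psi(a)=((1,\dots,1);\sigma)$ and $\psi(b)=(a,\dots,a,b)$, one obtains $\bar\psi(c)=((\bar a,\dots,\bar a,\bar b);\sigma^{-1})$; computing the normal closure $\bar\psi(K)=\langle\bar\psi(c)\rangle^{\bar\psi(\G)}$, then $\bar\psi(K')=[\bar\psi(K),\bar\psi(K)]$, and reading off the base coordinates yields $\phi(K')$. The statement to establish is that the projection to the first $p-2$ coordinates maps $\phi(K')$ \emph{isomorphically} onto $K/K'\times\overset{p-2}{\dots}\times K/K'$; that is, the last two coordinates of an element of $\phi(K')$ are determined modulo $K'$ by the first $p-2$ through two relations dictated by the uniserial structure $\G/K'\cong\Z[\theta]\rtimes C_p$ of Proposition \ref{uniserial}. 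Granting this, the image of $S|_{K'}$ is $K/K'\times\overset{p-2}{\dots}\times K/K'$, and for $g\in K'$ we have $S(g)=1$ if and only if $\phi(g)=1$ (by injectivity on $\phi(K')$), i.e.\ if and only if $g\in\ker\phi\cap K'=K_2$, giving the kernel. This is the computation underlying the proof of Theorem 4.5 in \cite{alcober-zugadi:hausdorff}.

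For part (ii), set $N:=(\st_\G(n+1)\cap K')K_2$; this is a subgroup of $K'$ since both factors lie in $K'$ and $K_2=\rst_\G(1)$ is normal. Because $\ker(S|_{K'})=K_2\leq N$, we have $(S|_{K'})^{-1}(S(N))=N$, so it suffices to prove $S(N)=\ker\pi_n$, where $\ker\pi_n=\bigl((\st_\G(n)\cap K)K'/K'\bigr)^{p-2}$ (using the modular law to rewrite the kernel recorded before the lemma). Since $S(K_2)=1$, this reduces to $S(\st_\G(n+1)\cap K')=\ker\pi_n$. The inclusion $\subseteq$ is immediate: the sections of an element of $\st_\G(n+1)$ lie in $\st_\G(n)$, and for an element of $K'$ the first $p-2$ sections lie in $K$ by part (i), hence in $(\st_\G(n)\cap K)K'$. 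For the reverse inclusion I would take prescribed first $p-2$ sections in $(\st_\G(n)\cap K)K'$, realise them by part (i) as the first $p-2$ coordinates of some $g\in K'$, use the compatibility of the two dependence relations with the level filtration to conclude that the remaining sections also lie in $\st_\G(n)K'$, and finally adjust $g$ by an element of $K_2=\psi^{-1}(K'\times\overset{p}{\dots}\times K')$ (which changes each section arbitrarily modulo $K'$) to land in $\st_\G(n+1)\cap K'$ with the required sections.

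The main obstacle is the explicit content of part (i): pinning down the two relations that express the last two sections of a $K'$-element in terms of the first $p-2$, and, for part (ii), checking that these relations \emph{respect the level filtration}, i.e.\ that $\st_\G(n)$-valued data in the first $p-2$ coordinates force $\st_\G(n)$-valued data in the remaining two. Both are precisely what the section computations in the proof of Theorem 4.5 of \cite{alcober-zugadi:hausdorff} provide; once they are available, the remainder is the formal bookkeeping sketched above.
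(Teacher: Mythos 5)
Your proposal is consistent with the paper, which in fact offers no proof of this lemma at all: it simply states that both properties ``can be extracted from the proof of Theorem 4.5 in \cite{alcober-zugadi:hausdorff}'', which is exactly where you also locate the substantive content (the relations forcing the last two sections of a $K'$-element from the first $p-2$, and their compatibility with the level filtration). Your surrounding reductions --- identifying $\ker(S|_{K_1})=K_2$ with injectivity of the projection on $\phi(K')$, rewriting $\ker\pi_n$ as $\bigl((\st_{\G}(n)\cap K)K_1/K_1\bigr)^{p-2}$ via the modular law, and adjusting by $K_2=\psi^{-1}(K'\times\overset{p}{\dots}\times K')$ to realise prescribed sections inside $\st_{\G}(n+1)\cap K_1$ --- are correct bookkeeping around that citation, so this is essentially the same approach.
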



\begin{proposition}\label{prop:K1/K2_csp}
	The group $K_1$ has the $p$-CSP modulo $K_2$.
\end{proposition}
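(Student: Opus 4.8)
The plan is to transport the statement across the identification $K_1/K_2\cong K/K_1\times\overset{p-2}{\dots}\times K/K_1$ furnished by Lemma \ref{maps}(1), and then feed in the fact, from Corollary \ref{csp_mod_K'}, that $K$ has the $p$-CSP modulo $K_1=K'$. Write $Q:=K/K_1$. By Lemma \ref{maps}(1) the restriction $S|_{K_1}$ is surjective onto $Q\times\overset{p-2}{\dots}\times Q$ with kernel exactly $K_2$, so it yields an isomorphism $K_1/K_2\cong Q\times\overset{p-2}{\dots}\times Q$ under which the level stabilizers of $K_1$ are tracked by the maps $\pi_n$ and $S_n=\pi_n\circ S$.

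First I would fix an arbitrary $N$ with $K_2\leq N\unlhd K_1$ and $K_1/N$ a finite $p$-group, and push it forward to $\overline N:=S(N)$. Since $S|_{K_1}$ is onto with kernel $K_2\leq N$, the image $\overline N$ is a normal subgroup of $Q\times\overset{p-2}{\dots}\times Q$ with quotient isomorphic to $K_1/N$, hence of $p$-power index. The goal then becomes to show that $\overline N\supseteq\ker\pi_n$ for some $n$: once this is known, pulling back along $S$ and using $K_2\leq N$ together with the computation $\ker S_n=(\st_{\G}(n+1)\cap K_1)K_2$ from Lemma \ref{maps}(2) gives $\st_{K_1}(n+1)\leq(\st_{\G}(n+1)\cap K_1)K_2\leq N$, which is exactly what is required.

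To produce such an $n$, let $Q_i$ denote the $i$th direct factor of $Q\times\overset{p-2}{\dots}\times Q$ and set $M_i:=\overline N\cap Q_i$. As $\overline N$ is normal, each $M_i$ is a normal subgroup of $Q\cong Q_i$, its index divides that of $\overline N$ and is therefore a power of $p$, and moreover $\overline N$ contains the product $M_1\times\overset{p-2}{\dots}\times M_{p-2}$. Now Corollary \ref{csp_mod_K'}, read in the quotient $Q$, says precisely that every normal subgroup of $Q$ of $p$-power index contains the image $\st_{\G}(m)K_1/K_1$ of some level stabilizer; applying this to each $M_i$ and letting $n$ be the maximum of the finitely many levels obtained gives $\st_{\G}(n)K_1/K_1\leq M_i$ for every $i$. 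Hence $\overline N$ contains $(\st_{\G}(n)K_1/K_1)\times\overset{p-2}{\dots}\times(\st_{\G}(n)K_1/K_1)=\ker\pi_n$, closing the loop.

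The step I expect to be the main obstacle is this passage through the direct power $Q\times\overset{p-2}{\dots}\times Q$: a normal subgroup of a direct product need not be a product of normal subgroups of the factors, because of diagonal subgroups, so $\overline N$ cannot be decomposed coordinatewise. The device that rescues the argument is to pass instead to the slices $M_i=\overline N\cap Q_i$, whose product is a genuine product still contained in $\overline N$; this works precisely because $\ker\pi_n$ is itself a product of copies of a single level-stabilizer image, so the inclusion $\ker\pi_n\leq\overline N$ can be checked factor by factor. The only remaining points, both bookkeeping consequences of Lemma \ref{maps}, are to confirm that $\st_{\G}(m)K_1/K_1$ is the correct description of each coordinate of $\ker\pi_m$ and that the pullback identity $(S|_{K_1})^{-1}(\ker\pi_n)=(\st_{\G}(n+1)\cap K_1)K_2$ delivers the level stabilizer $\st_{K_1}(n+1)$ inside $N$.
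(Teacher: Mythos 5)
Your proposal is correct and follows essentially the same route as the paper's proof: push $N$ forward along $S$, intersect the image with each direct factor $(K/K_1)_i$, invoke Corollary \ref{csp_mod_K'} coordinatewise to capture $\ker\pi_n$, and pull back via $\ker S_n=(\st_{\G}(n+1)\cap K_1)K_2$ from Lemma \ref{maps}. The only difference is cosmetic: you make explicit the point (left implicit in the paper) that one must pass to the slices $\overline N\cap Q_i$ rather than decompose $\overline N$ coordinatewise, and that this suffices because $\ker\pi_n$ is itself a direct product.
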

\begin{proof}
Let $K_2\leq N\np K_1$.
Then 
$$S(N)\np K/K_1\times \overset{p-2}{\dots}\times K/K_1.$$
For $i\in{1,\dots,p-2}$, the intersection of  $S(N)$ with the $i$th direct factor $(K/K_1)_i$ in $S(K_1)$ is of $p$-power index in $(K/K_1)_i$.
By Corollary \ref{csp_mod_K'}, 
it contains $(\st_{\G}(n_i)K_1/K_1)_i$ for some $n_i\in\N$. 
Taking $n=\max \{n_i\mid 1\leq i \leq p-1\}$ yields 
$$S(N)\geq \st_{\G}(n)K_1/K_1 \times \overset{p-2}{\dots} \times \st_{\G}(n)K_1/K_1.$$
That is, $S(N)\geq \ker \pi_n$, and thus
$N\geq S^{-1}(\ker \pi_n)=\ker S_n=(\st_{\G}(n+1)\cap K_1)K_2.$
%
%
%
\end{proof}

We must now separate the proof into two cases: $p=3$ and $p\geq 5$.
 This is because we would like to apply Theorem \ref{thm:csp} to $\G$ with $H=R=K_1$.
The only remaining hypothesis to check is that $K_1'\geq K_2$. 
However, this only holds  when $p\geq 5$, which is implicit in the proof of \cite[Lemma 4.2 (iii)]{alcober-zugadi:hausdorff}.
In fact, by (ii) in Lemma \ref{maps}, $K_1/K_2\cong K/K_1\times\overset{p-2}{\dots}\times K/K_1$, which is abelian, so $K_1'=K_2$.
%
%
%
In particular, this and Proposition \ref{prop:rists} imply that  
$\rst_{\G}(n)'=\rst_{\G}(n+1)$ for each $n\geq 1$. 

\begin{corollary}
	For every prime $p\geq 5$, the GGS-group $\G\leq\Aut T$ with constant vector has the $p$-CSP, but not the CSP.
\end{corollary}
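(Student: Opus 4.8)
The plan is to obtain the $p$-CSP as a single application of Theorem \ref{thm:csp}, with $\C$ the pseudo-variety of all finite $p$-groups, $G=\G$, and $H=R=K_1=K'$. Since $\G$ is weakly regular branch over $K'$ (Proposition \ref{prop:rists}), the subgroup $R=K_1$ is an admissible branching subgroup, and with this choice $H=R$ the auxiliary subgroup $L:=\psi^{-1}(H\times\overset{p}{\dots}\times H)$ appearing in the theorem is exactly $K_2=\rst_{\G}(1)$.

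It then remains to verify the chain $R\geq H\geq R'\geq L$ together with the two CSP-modulo hypotheses, all of which are already available. The inclusions $R\geq H$ (an equality) and $H\geq R'$ (since $K_1'\leq K_1$) are immediate, so the only real content of the chain is $R'=K_1'\geq K_2=L$. This is precisely where $p\geq 5$ enters: as recorded just above, for such $p$ one in fact has $K_1'=K_2$, the equality following because $K_1/K_2$ is abelian by Lemma \ref{maps}. The two remaining hypotheses are supplied by Corollary \ref{csp_mod_K'}, which gives that $\G$ has the $p$-CSP modulo $H=K_1$, and by Proposition \ref{prop:K1/K2_csp}, which gives that $K_1$ has the $p$-CSP modulo $K_2=L$. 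Theorem \ref{thm:csp} then yields that $\G$ has the $p$-CSP.

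For the failure of the full CSP I would invoke the fact, already noted at the start of this subsection, that $\G$ virtually surjects onto $\Z$ and hence has nontrivial finite quotients whose order is coprime to $p$. Because $\G$ lies inside a Sylow pro-$p$ subgroup of $\Aut T$, every quotient $\G/\st_{\G}(n)$ is a finite $p$-group; so any normal subgroup with quotient of order not a power of $p$ can contain no level stabilizer, and thus $\G$ fails the CSP. The whole argument is essentially bookkeeping once the earlier propositions are in place, and the single genuine constraint is the inclusion $K_1'\geq K_2$, whose failure at $p=3$ is exactly what forces that case to be treated separately.
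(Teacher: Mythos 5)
Your proposal is correct and is essentially the paper's own argument: a single application of Theorem \ref{thm:csp} with $H=R=K_1$ and $L=K_2=\rst_{\G}(1)$, the two CSP-modulo hypotheses supplied by Corollary \ref{csp_mod_K'} and Proposition \ref{prop:K1/K2_csp}, and the failure of the full CSP coming from the virtual surjection onto $\Z$ combined with the fact that all quotients by level stabilizers are finite $p$-groups. One small caution: the abelianness of $K_1/K_2$ from Lemma \ref{maps} only yields $K_1'\leq K_2$, which is the direction you do not need; the inclusion that actually matters, $K_1'\geq K_2$ for $p\geq 5$, is the one imported from \cite[Lemma 4.2 (iii)]{alcober-zugadi:hausdorff}, as you correctly identify in your closing remark.
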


Let us now prove the remaining case, so that from now on $p=3$. The following result can be found in \cite{bartholdi-grigorchuk:parabolic}.

\begin{lemma}\label{K2=G''}
Let $\G$ and $K$ be as before, then we have
$$\psi(\G'')=K'\times K'\times K'.$$
\end{lemma}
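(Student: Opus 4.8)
The plan is to prove the two inclusions separately, working with the image $\psi(\G')$ explicitly and then passing to commutators, using that $\psi$ is an isomorphism onto its image, so that $\psi(\G'')=\psi(\G')'$.

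First I would compute $\psi$ on a generator of $\G'$. Since $\G/\G'$ is abelian, $\G'=\langle[a,b]\rangle^{\G}$, and a direct calculation in the wreath product gives $\psi([a,b])=(1,u,u^{-1})$ with $u:=b^{-1}a$. Here $u^{-1}=a^{-1}b=(ba^{-1})^{a}$ is a conjugate of the generator $ba^{-1}$ of $K$, so $u\in K$. Conjugating $[a,b]$ by $a$ cyclically permutes the three coordinates, while conjugating by $b\in\st(1)$ conjugates the entries by $(a,a,b)$; in every case the resulting tuple is supported on exactly two coordinates and has all entries in $K$. Hence $\psi(\G')\leq K\times K\times K$, and taking derived subgroups yields the easy inclusion $\psi(\G'')=\psi(\G')'\leq (K\times K\times K)'=K'\times K'\times K'$.

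The substance is the reverse inclusion $\psi(\G'')\geq K'\times K'\times K'$. Since conjugation by $a$ induces the cyclic permutation of the coordinates, it suffices to produce $K'\times 1\times 1\leq\psi(\G'')$. For this I would exploit that the entries of the generators above are the $\G$-conjugates $u^{g}$, which generate $K=\langle ba^{-1}\rangle^{\G}$; using the fractalness of $\G$ (the projections of $\st(1)$ onto each coordinate are onto $\G$) one sees that, among the elements of $\psi(\G')$ whose third coordinate is trivial, the first coordinates run through all of $K$ while the third stays trivial. Forming commutators of such elements then yields every $[x,y]$ with $x,y\in K$ in the first coordinate, i.e.\ a subdirect copy of $K'$, and the already-known identity $\psi(\rst_{\G'}(1))=K'\times K'\times K'$ from the proof of Proposition~\ref{prop:rists} (which gives $K'\times 1\times 1\leq\psi(\G')$) helps to isolate the pure factor $K'\times 1\times 1$ inside $\psi(\G'')$.

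The main obstacle is precisely this last step. A priori the derived subgroup of a subdirect subgroup of $K\times K\times K$ need only be a proper, diagonal-type subdirect subgroup of $K'\times K'\times K'$, as the diagonal embedding of $K$ shows, so abstract subdirectness alone is insufficient and the unwanted entries in the remaining coordinates must actually be cancelled. What rescues the argument is the specific shape of the generators for the constant-vector GGS-group at $p=3$, each supported on two coordinates with mutually inverse entries, together with the fact that $\G$ is weakly regular branch over $K'$ \cite{alcober-zugadi:hausdorff}, which supplies the pure factors needed to absorb the junk in the mixed commutators. This is exactly the computation carried out in \cite{bartholdi-grigorchuk:parabolic}, whose result may simply be quoted.
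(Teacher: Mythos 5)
Your easy inclusion and your reduction (conjugation by $a$ permutes the coordinates cyclically, so it suffices to get $K'\times 1\times 1$ inside $\psi(\G'')$) match the paper. But the hard inclusion has a genuine gap in its mechanism. You propose to take commutators of elements of $\psi(\G')$ with trivial third coordinate, accept that the second coordinate of such a commutator is uncontrolled ``junk'', and then cancel that junk using pure factors supplied by the fact that $\G$ is weakly regular branch over $K'$. This does not work: weak regular branchness over $K'$ gives $1\times K'\times 1\leq \psi(K')$, whereas you need pure elements of $\psi(\G'')$. Since $\G'\leq K$ (as $\G/K\cong C_3$) and hence $\G''\leq K'$, knowing that the junk lies in $\psi(K')$ tells you nothing about membership in $\psi(\G'')$ --- arguing that $1\times K'\times 1\leq\psi(\G'')$ is essentially the statement you are trying to prove. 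Likewise, $\psi(\rst_{\G'}(1))=K'\times K'\times K'$ only places the pure factors inside $\psi(\G')$, one derived subgroup too high.

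The missing idea, which you half-signpost when you note that the generators are ``supported on two coordinates with mutually inverse entries'' but never exploit, is to commutate two elements of $\G'$ whose $\psi$-images are supported on coordinate sets meeting in exactly \emph{one} coordinate. The paper takes $\psi([b,a])=(y_1,1,y_1^{-1})$ (trivial in coordinate $2$) and $\psi([b^{-1},a]^a)=(y_0,y_0^{-1},1)$ (trivial in coordinate $3$); their commutator is then \emph{automatically} trivial in coordinates $2$ and $3$ --- no junk ever appears, so nothing needs absorbing --- and equals $([y_0,y_1],1,1)$. Since $K'=\langle [y_0,y_1]\rangle^{\G}$ and the first-coordinate projection of $\st_{\G}(1)$ is all of $\G$, conjugating inside the normal subgroup $\G''$ yields all of $K'\times 1\times 1$. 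Finally, ending with ``the result may simply be quoted from \cite{bartholdi-grigorchuk:parabolic}'' is not a proof; the paper cites that reference for the statement but still supplies the two-line computation above, and so should you.
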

\begin{proof}
One inclusion is clear because $\psi(\G')\leq K\times K\times K$ by \cite[Lemma 4.2 (iii)]{alcober-zugadi:hausdorff}.
For the other one, observe that $\psi([b,a])=(y_1,1,y_1^{-1})$ and $\psi([b^{-1},a]^a)=(y_0,y_0^{-1},1)$.
 Thus $\psi([[b,a],[b^{-1},a]^a])=([y_0,y_1],1,1)$ and, since $K'=\langle [y_0,y_1]\rangle ^{\G}$, the result follows.
\end{proof}

In order to apply Theorem \ref{thm:csp} with $R=K_1=K'$ and $H=K_2=\G''$ we must check that $K''\geq\psi^{-1}(\G''\times \G''\times\G'').$

\begin{proposition}\label{gamma3_K}We have $\G''\leq\gamma_3(K).$
\end{proposition}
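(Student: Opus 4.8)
The plan is to pass to the class-two nilpotent quotient $K/\gamma_3(K)$ and reduce the statement to a single numerical fact about the commutator pairing on the module $K/K'$. First, since $ba^{-1}\in K$ we have $b\equiv a$ modulo $K$, so $\G/K$ is cyclic of order $3$; in particular $\G'\leq K$, whence $\G''=[\G',\G']\leq K$, and being the image of a commutator subgroup in the abelian group $K/K'$ this image is trivial, so in fact $\G''\leq K'$. As $\gamma_3(K)$ is characteristic in $K\unlhd\G$ it is normal in $\G$, so we may pass to $Q:=K/\gamma_3(K)$, which is nilpotent of class at most $2$; the goal becomes to show that the image of $\G''$ in $Q$ is trivial.

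In $Q$ the commutator map is bilinear with central image, giving a $\Z$-bilinear alternating form
\[
\beta\colon K/K'\times K/K'\longrightarrow \gamma_2(K)/\gamma_3(K),\qquad \beta(\bar x,\bar y)=[x,y]\gamma_3(K),
\]
which is equivariant for the action of $a$ by conjugation. By Proposition~\ref{uniserial}, $K/K'\cong\Z[\theta]$ with $a$ acting as multiplication by $\theta$, and the image $c_0$ of $ba^{-1}$ generates it as a $\Z[\theta]$-module. Taking $y_0,y_1$ as in Lemma~\ref{K2=G''}, one checks $y_0=ba^{-1}$ and $y_1=y_0^{a}$, so $\overline{y_0}=c_0$ and $\overline{y_1}=\theta c_0$. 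Consequently $\gamma_2(K)/\gamma_3(K)$ is cyclic, generated by $g:=\beta(c_0,\theta c_0)=[y_0,y_1]\gamma_3(K)$, and since the norm of $\theta$ is $1$ the action of $a$ fixes $g$.

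Next I would identify the image of $\G'$. Passing to $\G/K'\cong\Z[\theta]\rtimes C_3$ (Proposition~\ref{uniserial}), the image of $\G'$ in $K/K'$ is the derived subgroup of this semidirect product, namely $(\theta-1)\Z[\theta]$. Because $Q$ has class $2$, the image of $\G''=[\G',\G']$ in $\gamma_2(K)/\gamma_3(K)$ equals the value of $\beta$ on $(\theta-1)\Z[\theta]$. Using the $\Z$-basis $(\theta-1)c_0,\ \theta(\theta-1)c_0$ of $(\theta-1)\Z[\theta]$ and expanding by bilinearity in the basis $c_0,\theta c_0$ of $\Z[\theta]$ (with $\theta^2 c_0=-c_0-\theta c_0$), a short calculation gives
\[
\beta\bigl((\theta-1)c_0,\ \theta(\theta-1)c_0\bigr)=3g .
\]
Hence the image of $\G''$ in $\gamma_2(K)/\gamma_3(K)$ is exactly $\langle 3g\rangle$, and the Proposition is equivalent to $3g=0$, that is, to $[y_0,y_1]^3\in\gamma_3(K)$.

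The main obstacle is precisely this last point: showing that the generator $g$ of $\gamma_2(K)/\gamma_3(K)$ has order dividing $3$ (equivalently, that $\gamma_2(K)/\gamma_3(K)\cong C_3$). This is where genuine information about $K$ is needed rather than formal commutator calculus, since for a free class-two group the corresponding quotient would be infinite cyclic and $3g$ would be nonzero; the ramification of $3$ in $\Z[\theta]$ (so that $3$ is a unit multiple of $(\theta-1)^2$) is what makes the collapse plausible but does not by itself force it. I would settle $[y_0,y_1]^3\in[K',K]$ by an explicit computation with $\psi$: from the section data underlying Proposition~\ref{prop:rists} and Lemma~\ref{K2=G''} one computes $\psi([y_0,y_1])$ together with its cube and exhibits the latter as a product of images of elements of $[K',K]$. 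Once this exponent-$3$ fact is in hand, $3g=0$, the image of $\G''$ in $K/\gamma_3(K)$ vanishes, and $\G''\leq\gamma_3(K)$ follows.
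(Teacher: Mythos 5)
Your reduction to the class-two quotient $K/\gamma_3(K)$ is sound as far as it goes: the identification of the image of $\G'$ in $K/K'\cong\Z[\theta]$ with $(\theta-1)\Z[\theta]$, and the bilinearity computation giving $\beta\bigl((\theta-1)c_0,\theta(\theta-1)c_0\bigr)=3g$, are correct. But the argument is incomplete at exactly the point you yourself flag as the main obstacle. Everything hinges on $3g=0$, i.e.\ on $[y_0,y_1]^3\in\gamma_3(K)=[K',K]$, and for this you offer only a promissory note (``I would settle \dots by an explicit computation with $\psi$''). By your own reduction this statement is \emph{equivalent} to the proposition being proved, so the problem has been repackaged rather than solved. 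Moreover the needed collapse is genuinely nontrivial and does not follow from anything established earlier: for $p=3$, Lemma~\ref{maps} gives $K'/\G''\cong K/K'\cong\Z^2$, so $K'$ modulo the candidate subgroup $\G''$ is infinite, and exhibiting $[y_0,y_1]^3$ explicitly as a product of elements of $[K',K]$ (rather than merely asserting it) is where all the work lies.

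For comparison, the paper avoids this bottleneck entirely by working with the generator $[a,b]$ of $\G'$ directly: since $\G''$ is the normal closure of the elements $[[a,b],[a,b]^g]$ with $g=ka^i$, $k\in K$, a commutator identity reduces everything to terms that are visibly in $[K',K]$ plus the single term $[[a,b],[a,b,a^i]]$, and the wreath recursion shows this term is \emph{identically trivial} because in each coordinate $\psi([a,b])$ and $\psi([a,b,a^i])$ are powers of the same element $a^{-1}b$ or $b^{-1}a$. That observation is the actual content of the proposition, and it is precisely what your outline still lacks. If you want to salvage your approach, you must carry out the cube computation; alternatively, note that the paper's coordinate argument would also hand you $3g=0$ a posteriori, but then your module-theoretic framework is doing no work.
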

\begin{proof}
Since $\G'=\langle[a,b]\rangle^{\G}$, we have 
$$\G''=\langle [[a,b],[a,b]^g]\mid g\in\G\rangle^{\G}.$$
Because $\gamma_3(K)$ is normal in $\G$, it suffices to prove that $ [[a,b],[a,b]^g]\in\gamma_3(K)$ for every $g\in\G$.
We already know that $\G/K\cong C_3$ and we can take as coset representatives $\{1,a,a^2\}$.
Write $g=ka^i$ with $i\in \F_3$ and $k\in K$. If $i=0$ there is nothing to prove, because 
\begin{align*}
[[a,b],[a,b]^g]&=[[a,b],[a,b][a,b,g]]\\
&=[[a,b],[a,b,g]],
\end{align*}
and since $G'\leq K$, the element belongs to $\gamma_3(K)$.

Suppose that $g=ka^i$ with $i=1,2$ and $k\in K$. Now we have

\begin{align*}
[[a,b],[a,b,ka^i]]&=[[a,b],[a,b,a^i][a,b,k]^{a^i}]\\
&=[[a,b],[a,b,k]^{a^i}][[a,b],[a,b,a^i]]^{[a,b,k]^{a^i}}.
\end{align*}

It is clear that the first factor is in $\gamma_3(K)$. On the other hand,
\begin{align*}
\psi([a,b])&=(b^{-1}a,1,a^{-1}b),\\
\psi([a,b,a])&=((a^{-1}b)^2,b^{-1}a,b^{-1}a),\\
\psi([a,b,a^2]&=(a^{-1}b,a^{-1}b,(b^{-1}a)^2),
\end{align*}
 imply that the second factor is trivial for $i=1,2$.
\end{proof}

\begin{proposition}
We have $\psi(K'')\geq \G''\times\G''\times \G''.$
\end{proposition}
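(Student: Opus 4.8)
The plan is to prove the equivalent statement $\psi(K'')\supseteq\G''\times\G''\times\G''$ by first reducing it to a single coordinate, and then exploiting the fact that $K'$ projects onto all of $K$ at each first-level vertex. Since $K''$ is characteristic in $K\unlhd\G$, it is normal in $\G$, so $\psi(K'')$ is invariant under conjugation by $\psi(a)=(1,1,1)(1\;2\;3)$, which cyclically shifts the three coordinates of $\Aut T\times\Aut T\times\Aut T$. Hence it suffices to produce $W\times 1\times 1\subseteq\psi(K'')$ for a suitable subgroup $W$: conjugating by $\psi(a)$ and $\psi(a^2)$ then places copies of $W$ in the other two coordinates, so that $W\times W\times W\subseteq\psi(K'')$. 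By Proposition~\ref{gamma3_K} we have $\G''\leq\gamma_3(K)$, so it is enough to take $W=\gamma_3(K)$; that is, I would show $\gamma_3(K)\times 1\times 1\subseteq\psi(K'')$.

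To produce this first coordinate I would combine two ingredients. First, since $\G$ is weakly regular branch over $K'$ (as used in Proposition~\ref{prop:rists}), we have $\psi(K')\supseteq K'\times K'\times K'$; in particular, for every $c\in K'$ the element $c^\sharp:=\psi^{-1}(c,1,1)$ lies in $K'$. Second, part (1) of Lemma~\ref{maps} identifies the image of $K'$ under the first-coordinate section map $S$ with $K/K'$ (for $p=3$ there is a single factor); together with the inclusion $K'\subseteq\psi(K')$ in the first coordinate, this shows that the projection of $K'$ to the first coordinate is all of $K$. Thus for each $k\in K$ there is some $g\in K'$ with $\psi(g)=(k,g_2,g_3)$ for appropriate $g_2,g_3$.

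With these in hand I would compute, for $c\in K'$ and $k\in K$, the commutator $[c^\sharp,g]\in[K',K']=K''$ and observe that, since $c^\sharp,g\in K'\leq\st_{\G}(1)$ decompose as tuples with trivial top permutation,
$$\psi([c^\sharp,g])=[(c,1,1),(k,g_2,g_3)]=([c,k],1,1).$$
As $c$ ranges over $K'$ and $k$ over $K$, the commutators $[c,k]$ generate $\gamma_3(K)=[K',K]$, so these elements generate $\gamma_3(K)\times 1\times 1$ inside $\psi(K'')$. Feeding this into the cyclic-shift argument and using $\G''\leq\gamma_3(K)$ then yields $\G''\times\G''\times\G''\subseteq\psi(K'')$, as required.

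I expect the main obstacle to be the clean identification of the first-coordinate projection of $K'$ with the whole of $K$: this is exactly what forces the commutators $[c,k]$ to range over a generating set of $\gamma_3(K)$ rather than merely of $K''$, and it is the precise point where the case $p=3$ behaves differently from $p\geq 5$ (where $K''=K_2$ already). Everything else is bookkeeping with the direct-product structure of $\psi(\st_{\G}(1))$ and the normality of $K''$ in $\G$; the only care needed is to check that the auxiliary coordinates $g_2,g_3$ of $g$ do not interfere, which they do not, since in the commutator they are only paired against the identity.
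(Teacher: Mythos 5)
Your proof is correct and follows essentially the same route as the paper: reduce to $\gamma_3(K)\times\gamma_3(K)\times\gamma_3(K)$ via Proposition~\ref{gamma3_K}, take a commutator of an element of $K'$ supported in the first coordinate with an element of $K'$ whose first coordinate is an arbitrary element of $K$, and finish by cyclic symmetry. The only (harmless) difference is that you justify the subdirectness of $K'$ in $K\times K\times K$ via Lemma~\ref{maps}(1), whereas the paper uses the explicit computation $\psi([y_0,y_1])=(y_2,y_0,y_1)$ together with $K'=\langle[y_0,y_1]\rangle^{\G}$.
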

\begin{proof}
By Proposition \ref{gamma3_K}, it suffices to prove the containment 
$$\psi(K'')\geq\gamma_3(K)\times\gamma_3(K)\times\gamma_3(K).$$

Since $\G$ is weakly regular branch over $K'$, we know that for every $k_1\in K'$ there is some $g_1\in K'$ such that $\psi(g_1)=(k_1,1,1)$. On the other hand, since $\psi([y_0,y_1])=(y_2,y_0,y_1)$  we get that $K'$ is subdirect in $K\times K\times K$. Thus, for every $k_2\in K$ there is some $g_2\in K'$ such that $\psi(g_2)=(k_2,*,*).$ Finally, we obtain $$\psi([g_1,g_2])=([k_1,k_2],1,1),$$
and the result follows.
\end{proof}

Now we can apply Theorem \ref{thm:csp} with $R=K'=K_1$ and $H=\G''=K_2$. 
By Proposition~\ref{prop:K1/K2_csp}, we only need to prove that $K_2=\G''$ has the $p$-CSP modulo $\psi^{-1}(\G''\times \G''\times \G'')$. 
Lemma~\ref{K2=G''} implies that  $\G'' /\psi^{-1}(\G''\times \G''\times \G'')\cong K_1/K_2\times K_1/K_2\times K_1/K_2$, and then using again Proposition \ref{prop:K1/K2_csp} the result follows.
\subsection{Example: Basilica group}
This group was defined by R. Grigorchuk and A. Zuk in \cite{grigorchuk-zuk:torsion-free}. 
In the same paper they prove that this group is torsion-free and weakly branch. 
We recall here the definition and some auxiliary results proved there. 
\begin{definition}
Let $T$ be the binary tree. The Basilica group $G$ is generated by two automorphisms $a$ and $b$ defined recursively as follows:
$$a=(1,b) \qquad
b=(1,a)\varepsilon$$ 
where $\varepsilon$ denotes the swap at the root.
\end{definition}
\begin{lemma}\label{basilica:multi-lemma}
Let $G$ be the Basilica group. Then,
\begin{enumerate}
\item $G$ acts transitively on all levels of $T$,
\item $\psi(G')\geq G'\times G'$, so $G$ is weakly branch over $G'$,
\item $G'=\psi^{-1}(G'\times G')\rtimes \langle [a,b]\rangle$,
\item $G/G'=\langle a\rangle\times\langle b\rangle\cong\Z\times\Z$,
\item $G$ is torsion-free.
\end{enumerate}
\end{lemma}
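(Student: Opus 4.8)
The plan is to read everything off the wreath recursion $a=(1,b)$, $b=(1,a)\varepsilon$ after recording a few section computations that I will reuse: $b^2=(a,a)$, $a^{b^{-1}}=(b,1)$, $\psi([a,b])=(b^a,b^{-1})$ and $\psi([a,b^2])=(1,[a,b]^{-1})$. Since $a$ fixes both top subtrees while $b$ swaps them, the level-$1$ action gives $G/\st_G(1)\cong C_2$, and Schreier's method yields $\st_G(1)=\langle a,\,a^{b^{-1}},\,b^2\rangle$. For (i), $b$ already gives transitivity on level $1$; projecting the three generators $(1,b)$, $(b,1)$, $(a,a)$ of $\st_G(1)$ onto either coordinate produces both $a$ and $b$, so each coordinate projection of $\st_G(1)$ is onto $G$, i.e.\ $G$ is self-replicating. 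Transitivity on level $1$ together with surjectivity of these projections then gives transitivity on every level by induction on the level.

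For (iv), $G=\langle a,b\rangle$ makes $G/G'$ a quotient of $\Z^2$, so it suffices to exhibit a surjection $G\twoheadrightarrow\Z^2$ sending $a,b$ to a basis: I would take the pair of exponent-sum maps $w\mapsto(e_a(w),e_b(w))$ and check they descend to $G$, i.e.\ that every relator has zero $a$- and $b$-exponent sum, which follows from the $L$-presentation of the Basilica group (its relators being iterated commutator-type words). This also shows $\bar a,\bar b$ have infinite order, which I will need below. For (ii), note that $\psi([a,b^2])=(1,[a,b]^{-1})$ places $(1,[a,b])$ in $\psi(G')$. Let $D=\{h\in G:(1,h)\in\psi(G')\}$; since $G'\unlhd G$ gives $\psi(G')\unlhd\psi(G)$, conjugating $(1,h)$ by $\psi(s)=(s_0,s_1)$ with $s\in\st_G(1)$ yields $(1,h^{s_1})\in\psi(G')$, and as $s_1$ runs over the (onto, by (i)) second coordinate projection of $\st_G(1)$ we get $D\unlhd G$. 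As $D\ni[a,b]$ and $G'=\langle[a,b]\rangle^G$, this forces $D\supseteq G'$, i.e.\ $1\times G'\leq\psi(G')$; conjugating by $b$ swaps the two coordinates, so $G'\times 1\leq\psi(G')$ too, whence $\psi(G')\geq G'\times G'$ and $G$ is weakly regular branch over $G'$.

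For (iii), set $N:=\psi^{-1}(G'\times G')$. Evaluating the abelianization $\alpha$ and the maps $g\mapsto\alpha(g_0),\alpha(g_1)$ on the generators $a,a^{b^{-1}},b^2$ of $\st_G(1)$ gives the identity $\alpha(g)=\tau\bigl(\alpha(g_0)+\alpha(g_1)\bigr)$ for $g\in\st_G(1)$ with $\psi(g)=(g_0,g_1)$, where $\tau$ is the coordinate swap of $G^{ab}$. In particular $g_0,g_1\in G'$ forces $\alpha(g)=0$, so $N\leq G'$, and $N\unlhd G'$ because $\psi(G')$ normalises $G'\times G'$. Using $\psi([a,b])=(b^a,b^{-1})$ and the infinite order of $\bar b$ from (iv), $[a,b]^k\in N$ forces $k=0$, so $N\cap\langle[a,b]\rangle=1$; and the same abelianised recursion shows every coset of $N$ in $G'$ is represented by a power of $[a,b]$. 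Hence $G'=N\rtimes\langle[a,b]\rangle$, which is (iii).

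Finally (v), the torsion-freeness, is the real obstacle and I would do it last. The reduction step is soft: if $g\neq1$ had finite order, then either $g\in\st_G(1)$, in which case both sections $g_0,g_1$ are again torsion, or $g$ swaps the top subtrees, in which case $g$ has even order and $g_0g_1$ is torsion of strictly smaller order; thus torsion elements yield torsion sections. The substance is a contraction argument: the Basilica group is contracting (it is the iterated monodromy group of $z^2-1$) with an explicit finite nucleus all of whose non-trivial elements have infinite order. Iterating the section map drives any element into the nucleus, so a non-trivial torsion element would acquire a non-trivial torsion section inside a torsion-free nucleus, a contradiction. The technical crux — and where I expect the bookkeeping to be heaviest — is establishing the contraction bound and identifying the nucleus (equivalently, running the induction on word length carefully); this is precisely the content of the torsion-freeness theorem of Grigorchuk--Zuk, which I would either reproduce by induction on $\{a,b\}$-length or cite.
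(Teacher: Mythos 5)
The paper does not actually prove this lemma: it is stated as a recollection of results from Grigorchuk--Zuk \cite{grigorchuk-zuk:torsion-free}, so the ``paper's proof'' is a citation. Your proposal, by contrast, derives (i)--(iv) from scratch, and your computations check out: $b^2=(a,a)$, $a^{b^{-1}}=(b,1)$, $\psi([a,b])=(b^a,b^{-1})$ and $\psi([a,b^2])=(1,[a,b]^{-1})$ are all correct; the Schreier generators of $\st_G(1)$ do project onto $G$ in each coordinate, giving self-replication and hence level transitivity; the normal-closure argument for $1\times G'\leq\psi(G')$ is the standard one and works; and the ``sum of sections in the abelianization'' identity $\alpha(g)=\tau(\alpha(g_0)+\alpha(g_1))$ is verified on the generators of $\st_G(1)$ and does yield both $\psi^{-1}(G'\times G')\leq G'$ and the fact that $G'/\psi^{-1}(G'\times G')$ is infinite cyclic on $[a,b]$ (one should also record $[a,b]^a=[a,b]$ and $[a,b]^b\equiv[a,b]^{-1}$ modulo $\psi^{-1}(G'\times G')$ to see that conjugates of $[a,b]$ contribute nothing new, but your abelianized recursion does capture this). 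Item (iv) leans on the $L$-presentation, which is itself a theorem of the same source, so at that point you are citing roughly what the paper cites.

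The one substantive caveat is (v). Your fallback --- reproduce or cite Grigorchuk--Zuk's induction on word length --- is exactly right and matches what the paper does. But the ``soft'' alternative you float, namely that a contracting group with a torsion-free nucleus must be torsion-free, is not a valid argument as stated. The descent ``torsion elements have torsion sections'' stalls at elements of order $2$ that swap the two subtrees: such a $g$ has $\psi(g)=(g_0,g_0^{-1})\varepsilon$ with $g_0$ of arbitrary (possibly infinite) order, and deeper sections of $g$ land in the nucleus only in orbit-products $g_{g v}g_v=1$, which does not force the individual sections to be trivial. (Compare $\langle\varepsilon\rangle\leq\Aut T$: all sections below level $1$ are trivial, yet the element is torsion.) So the nucleus computation alone does not close the argument; the genuine content is the word-length induction in \cite{grigorchuk-zuk:torsion-free}, and you should cite it rather than rely on the contraction heuristic.
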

Since $G/G'\cong \Z\times\Z$, and all quotients by level stabilizers are 2-groups, $G$ does not have the congruence subgroup property.
We show below that it has the $2$-CSP.
\begin{lemma}
Let $A:=\langle a\rangle^G$ and $B:=\langle b\rangle^G$. Then $G'=A\cap B$.
\end{lemma}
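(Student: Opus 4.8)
The plan is to prove the two inclusions separately, both of which will follow readily from Lemma~\ref{basilica:multi-lemma}(iv), namely that $G/G'=\langle a\rangle\times\langle b\rangle\cong\Z\times\Z$, so that the images of $a$ and $b$ form a free basis of the abelianisation.

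For the inclusion $G'\leq A\cap B$, I would observe that since $a\in A$, the quotient $G/A$ is generated by the single image of $b$, hence is cyclic and in particular abelian; therefore $G'\leq A$. By the symmetric argument (using $b\in B$) one gets $G'\leq B$, and so $G'\leq A\cap B$.

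For the reverse inclusion $A\cap B\leq G'$, I would pass to the abelianisation via the quotient map $\pi\colon G\to G/G'$. Since conjugation becomes trivial in $G/G'$, the image $\pi(A)$ is generated by $\pi(a)$ alone and $\pi(B)$ by $\pi(b)$ alone; under the identification $G/G'\cong\Z\times\Z$ these are precisely the two coordinate subgroups $\langle\pi(a)\rangle$ and $\langle\pi(b)\rangle$, whose intersection is trivial. As $\pi(A\cap B)\subseteq\pi(A)\cap\pi(B)=1$, every element of $A\cap B$ lies in $\ker\pi=G'$. Combining the two inclusions yields $G'=A\cap B$.

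I do not expect a genuine obstacle here: once Lemma~\ref{basilica:multi-lemma}(iv) is available, the statement is essentially formal. The only point requiring a little care is the identification of $\pi(A)$ and $\pi(B)$ with the coordinate subgroups of $\Z\times\Z$, where one must use that normal generation collapses to ordinary generation in an abelian quotient; but this is routine.
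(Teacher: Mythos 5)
Your proof is correct and follows essentially the same route as the paper: both inclusions are derived from Lemma~\ref{basilica:multi-lemma}(iv), the paper getting $G'\leq A\cap B$ from $[a,b]\in A\cap B$ and the reverse inclusion from $G/(A\cap B)\cong G/A\times G/B\cong\Z\times\Z$, which is just a repackaging of your computation of the images of $A$ and $B$ in the abelianisation. No gaps.
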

\begin{proof}
 $G'=\langle [a,b]\rangle^G$ and $[a,b]\in A\cap B$, so $G'\leq A\cap B$.
 Since $G/(A\cap B) \cong G/A \times G/B \cong \Z \times \Z$, the result follows.
\end{proof}
\begin{lemma}\label{basilica:rist}
With $A$ and $B$ as above we have
\begin{enumerate}
\item $\rst_G(1)=A$ with $\psi(A)=B\times B$,
\item $\psi_{n-1}(\rst_G(n))=G'\times\overset{2^{n-1}}{\dots}\times G'$.
\end{enumerate}
\end{lemma}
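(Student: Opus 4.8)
The plan is to establish (1) by a direct section computation and then deduce (2) by induction on $n$, with (1) as the base case and the weakly branch structure over $G'$ supplying the easy half. For (1), since $a$ fixes the first level and $\st_G(1)\unlhd G$, we have $A=\langle a\rangle^G\le\st_G(1)$; as $b$ swaps the two top subtrees, $G=\st_G(1)\sqcup\st_G(1)b$, and hence $A$ is generated by the $\st_G(1)$-conjugates of $a$ and of $a^b$. Writing $\psi(g)=(g_0,g_1)$ for $g\in\st_G(1)$, a short computation gives $\psi(a)=(1,b)$ and $\psi(a^b)=(b^a,1)$. The action is fractal (the maps $g\mapsto g_0$ and $g\mapsto g_1$ from $\st_G(1)$ to $G$ are onto, since $\psi(b^2)=(a,a)$ and $\psi(a^b)=(b^a,1)$ together produce $a$ and $b$ in each coordinate), so conjugating $(1,b)$ and $(b^a,1)$ by $\st_G(1)$ fills $1\times B$ and $B\times 1$ respectively. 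This yields $\psi(A)=B\times B$, so in particular $A\le\rst_G(1)$ and each first-level section group contains $B$.

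For the reverse inclusion $\rst_G(1)\le A$ I would pass to the abelianisation. Let $\beta\colon G\to G/B\cong\Z$ be the $a$-exponent homomorphism, so that $\ker\beta=B$, and consider the homomorphism $\Phi\colon\st_G(1)\to\Z^2$, $g\mapsto(\beta(g_0),\beta(g_1))$. Evaluating $\Phi$ on the Schreier generators $a,\,a^{b^{-1}},\,b^2$ of $\st_G(1)$ (whose $\psi$-images are $(1,b)$, $(b,1)$, $(a,a)$) shows that $\Phi$ sends each to $(0,0)$ or $(1,1)$, so its image lies in the diagonal and $\beta(g_0)=\beta(g_1)$ for every $g\in\st_G(1)$. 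Consequently, if $g$ lies in one of the two first-level rigid stabilisers its unique non-trivial section has trivial $\beta$-value and so lies in $B$; thus each section group is exactly $B$ and $\rst_G(1)=\psi^{-1}(B\times B)=A$, completing (1).

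For (2) I would induct on $n$, the point being that the first-level section groups stabilise to $G'$ once one descends past the top level. Lemma~\ref{basilica:multi-lemma}(ii) gives $\psi(G')\ge G'\times G'$, which furnishes the forward containment in the product decomposition at every level. The substance is the reverse containment, for which I would show that the section group of $G'$ at a first-level vertex is again precisely $G'$: here I would use the exact description $\psi(G')=(G'\times G')\langle\psi([a,b])\rangle$ coming from Lemma~\ref{basilica:multi-lemma}(iii), together with the fact that, in $(G/G')^2\cong\Z^2\times\Z^2$, the images of $\psi([a,b])=(b^a,b^{-1})$ and of $\psi(b^2)=(a,a)$ generate a subgroup meeting the set of elements with trivial second coordinate only inside $G'\times G'$. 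This self-reproducing property of $G'$, combined with the fact that below the first level each $\rst_G(v)$ is carried isomorphically, via its section, onto the corresponding section group, lets one identify $\rst_G(n)$ level by level as the asserted product and run the induction off the base case~(1).

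The main obstacle throughout is the reverse inclusions. The forward containments are automatic from weak branching, but bounding the section groups from above — proving that no unwanted automorphisms occur — is exactly what requires the abelianisation argument in (1) and the careful $\Z^2\times\Z^2$ bookkeeping via Lemma~\ref{basilica:multi-lemma}(iii) in (2). The delicate point is that $\psi^{-1}(G'\times G')$ sits strictly inside $G'$, so one must track precisely which sections survive the rigidity constraint imposed at each successive level.
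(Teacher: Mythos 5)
Your proof of part (i) is correct and is a genuinely different route from the paper's, which simply cites Lemma~3 of \cite{grigorchuk-zuk:torsion-free}. Both halves are sound: conjugating $\psi(a)=(1,b)$ and $\psi(a^b)$ by $\psi(\st_G(1))$, whose coordinate projections are onto $G$, gives $\psi(A)=B\times B$ and hence $A\leq \rst_G(1)$; and the homomorphism $\Phi=(\beta\times\beta)\circ\psi$ landing in the diagonal of $\Z^2$ forces the unique non-trivial section of an element of $\rst_G(v)$, $|v|=1$, into $B=\ker\beta$, giving the reverse inclusion. This is a clean, self-contained substitute for the citation.

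Part (ii) is where the genuine gaps are. First, the ``forward containment'' is \emph{not} automatic from weak branching: $\psi(G')\geq G'\times G'$ yields $\psi_{n-1}(\rst_G(n-1))\geq G'\times\overset{2^{n-1}}{\dots}\times G'$, whereas the statement concerns $\rst_G(n)$, one level deeper, and the containment $\psi_{n-1}(\rst_G(n))\geq G'\times\overset{2^{n-1}}{\dots}\times G'$ actually fails. Concretely, $([a,b],1)\in G'\times G'$, but $\psi^{-1}\bigl(([a,b],1)\bigr)\notin \rst_G(2)$: its restriction to $T_{00}$ would have to be the automorphism with section $b^{a}$ at $00$ and trivial sections elsewhere, and since $\psi(B)\cap(\Aut T\times 1)=G'\times 1$ while $b^{a}\notin G'$, that restriction does not lie in $G$. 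Second, your intermediate claim that ``the section group of $G'$ at a first-level vertex is again precisely $G'$'' is false as stated: the first-coordinate projection of $\psi(G')=(G'\times G')\langle (b^{a},b^{-1})\rangle$ is $G'\langle b\rangle$, not $G'$. What your $\Z^2\times\Z^2$ bookkeeping actually establishes---and what the induction really needs---is the intersection statement $\psi(G')\cap(\Aut T\times 1)=G'\times 1$, equivalently: an element of $G'$ (or of $B$) supported on a single first-level subtree has its non-trivial section in $G'$.

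If you run the induction with that corrected ingredient, you obtain $\psi_{n}(\rst_G(n))=G'\times\overset{2^{n}}{\dots}\times G'$ for $n\geq 2$, equivalently $\psi_{n-1}(\rst_G(n))=\psi^{-1}(G'\times G')\times\overset{2^{n-1}}{\dots}\times\psi^{-1}(G'\times G')$; this differs from the displayed statement by one level, and the difference is real because $\psi^{-1}(G'\times G')$ has infinite index in $G'$ by Lemma~\ref{basilica:multi-lemma}. The paper's own proof proceeds differently, via the recursion $\psi(\rst_G(n))=(\rst_G(n-1)\times\rst_G(n-1))\cap\psi(\rst_G(n-1))$ together with $(A\times A)\cap(B\times B)=(A\cap B)\times(A\cap B)=G'\times G'$; note that the same element $([a,b],1)$ shows the right-hand side of that recursion can strictly exceed the left, so you cannot use it to bridge the gap between what your argument delivers and what is displayed. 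At a minimum you must carry out the induction explicitly and reconcile the indexing; as written, the sketch does not prove the stated identity.
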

\begin{proof}

The first item is Lemma 3 of \cite{grigorchuk-zuk:torsion-free}.
For the rest of the levels, the fact that $$\psi(\rst_G(n))=(\rst_G(n-1)\times\rst_G(n-1))\cap \psi(\rst_G(n-1))$$
for every $n$, implies in particular that
 $\psi(\rst_G(2))=(A\times A)\cap (B\times B)=G'\times G'$.
 The claim follows because $\psi(G')\geq G'\times G'$.
\end{proof}

It was pointed out to us by D. Francoeur, and separately by B. Klopsch  and H. Sasse, that Lemma 8 of \cite{grigorchuk-zuk:torsion-free} is inaccurate, which also affects the proof of Lemma 9 in that paper (although not the statement). 
Since we will make use of these results, we provide corrected versions. 

\begin{lemma}\label{lem:correct_lemmas89}
The following hold:
\begin{enumerate}
\item $\psi(G'')=\gamma_3(G)\times\gamma_3(G)$;
\item $\gamma_3(G)/G''\cong \Z^2$;
\item $G'/G''\cong \Z^3$.
\end{enumerate}
\end{lemma}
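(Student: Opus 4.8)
The plan is to exploit the self-similar structure through $\psi$ together with the weakly branch identity $\psi(N)=G'\times G'$. Writing $c:=[a,b]$ and $N:=\psi^{-1}(G'\times G')$, so that $G'=N\rtimes\langle c\rangle$ by Lemma~\ref{basilica:multi-lemma}, the engine of the whole computation is the single calculation $\psi(c)=\psi([a,b])=(b,b^{-1})$, read off directly from $a=(1,b)$ and $b=(1,a)\varepsilon$. Its crucial consequence is $[c,a]=1$, since $\psi([c,a])=[(b,b^{-1}),(1,b)]=(1,1)$: in the Basilica group $[a,b]$ commutes with $a$. This relation (absent in the free group) forces $\gamma_3(G)=[G',G]$ to be the normal closure $\langle[c,b]\rangle^G$ of a single element.

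For part (i) I would first note $\psi([N,N])=[G'\times G',G'\times G']=G''\times G''$, so $G''\times G''\leq\psi(G'')$. The remaining generators of $G''$ come from $[N,c]$, and $\psi([N,c])=[G'\times G',(b,b^{-1})]=\{([g_1,b],[g_2,b^{-1}]):g_i\in G'\}$. Together with $G''\times G''$ these generate $(G''[G',b])\times(G''[G',b])$. The point is that $G''[G',b]=\gamma_3(G)$: it is visibly contained in $\gamma_3(G)$, and since $[c,a]=1$ the subgroup $G''[G',b]$ is normal in $G$ and contains $[c,b]$, hence contains $\langle[c,b]\rangle^G=\gamma_3(G)$. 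Thus $\psi(G'')\supseteq\gamma_3(G)\times\gamma_3(G)$; the reverse inclusion is immediate, as every displayed generator lies in $\gamma_3(G)\times\gamma_3(G)$, a subgroup normalised by $\psi(G')$.

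For parts (ii) and (iii) I would use that $\psi$ induces an isomorphism $G'/G''\cong\psi(G')/(\gamma_3(G)\times\gamma_3(G))$. Since $\psi(G')=\langle G'\times G',(b,b^{-1})\rangle$, the image of $G'\times G'$ is $(G'/\gamma_3(G))^2$, the element $(b,b^{-1})$ centralises it (as $b$ centralises $\gamma_2(G)/\gamma_3(G)$) and has infinite order modulo $G'\times G'$ (because $b^k\notin G'$ for $k\neq0$). This yields a split extension $1\to(G'/\gamma_3(G))^2\to G'/G''\to\Z\to1$ of abelian groups, so $G'/G''\cong(G'/\gamma_3(G))^2\times\Z$. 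Everything therefore reduces to showing $G'/\gamma_3(G)\cong\Z$; granting this, $G'/G''\cong\Z^2\times\Z=\Z^3$ gives (iii), and $\gamma_3(G)/G''$, being the kernel of the induced surjection $\Z^3\to\Z$, is $\cong\Z^2$, giving (ii).

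The main obstacle is exactly the claim $G'/\gamma_3(G)\cong\Z$, i.e.\ that $[a,b]$ has \emph{infinite} order modulo $\gamma_3(G)$; the cyclicity of $\gamma_2(G)/\gamma_3(G)$ is standard, so the real danger is hidden torsion. I would resolve this through the module viewpoint: $[c,a]=1$ makes $V:=G'/G''$ a cyclic module over $\Z[y^{\pm1}]$ (with $y$ the image of $b$) generated by $\overline c$, and a second explicit calculation $\psi([c,b])=(b^{-2},b^2c^{-1})=(b,b^{-1})^{-2}(1,c^{-1})$ translates into the relation $(y^3+y^2-y-1)\overline c=0$, that is $(y-1)(y+1)^2\,\overline c=0$. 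The delicate step is to verify that this is the \emph{full} relation, so that $V\cong\Z[y]/((y-1)(y+1)^2)\cong\Z^3$ exactly; the relations by themselves only exhibit $V$ as a quotient of $\Z^3$, which a priori leaves room for torsion and is precisely the gap in Lemma~8 of \cite{grigorchuk-zuk:torsion-free}. Once the presentation is shown to be exact, the factor $(y-1)$ yields $G'/\gamma_3(G)=V/(y-1)V\cong\Z$ and the factor $(y+1)^2$ yields $\gamma_3(G)/G''=(y-1)V\cong\Z[y]/((y+1)^2)\cong\Z^2$, completing (ii) and (iii).
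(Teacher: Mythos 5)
Your reduction is well organised and part (i) is essentially sound (and arguably cleaner than the paper's computation of conjugates of $[a,b^{-1}]$ by powers of $b$): the observation that $[[a,b],a]=1$ forces $a$ to act trivially on $G'/G''$, hence $\gamma_3(G)=G''[G',b]$, and the decomposition $G'=\psi^{-1}(G'\times G')\rtimes\langle[a,b]\rangle$ then yields $\psi(G'')=\gamma_3(G)\times\gamma_3(G)$. One small computational slip: $\psi([a,b])=(b^a,b^{-1})$, not $(b,b^{-1})$; it is $[a,b^{-1}]$ that equals $(b,b^{-1})$. This does not affect $[[a,b],a]=1$, and the errors happen to cancel in your relation for $\psi([[a,b],b])$ modulo $G''\times G''$, but it should be fixed.

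The genuine gap is in parts (ii) and (iii). You correctly reduce everything to showing that the cyclic $\Z[y^{\pm1}]$-module $V=G'/G''$ is \emph{exactly} $\Z[y]/\bigl((y-1)(y+1)^2\bigr)\cong\Z^3$, you correctly observe that the relation only exhibits $V$ as a quotient of $\Z^3$ with possible hidden torsion, and you even remark that this is precisely the gap in Lemma~8 of Grigorchuk--Zuk --- but then you write ``once the presentation is shown to be exact'' and never show it. That exactness claim \emph{is} the content of the lemma; everything before it was already implicit in the original paper. The paper closes this gap by explicit coordinate computations: writing $x=[[a,b^{-1}],b]\equiv(b^{-2}[b,a]^{-1},b^2)$ and $y\equiv([b,a]^{-1},[b,a])$ modulo $\gamma_3(G)\times\gamma_3(G)=\psi(G'')$, assuming $[a,b]^n\equiv x^ry^s$, and comparing coordinates using the fact that $b^m\in G'$ only for $m=0$ (since $G/G'\cong\Z^2$); this forces $n=r=s=0$, giving the infinite orders and the linear independence of the three generators of $G'/G''$. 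You need to supply an argument of this kind --- some input beyond the relation itself, such as the faithfulness of the projection to $G/G'\cong\Z^2$ in each tree coordinate --- to rule out extra relations; without it the proof establishes only that $\gamma_3(G)/G''$ and $G'/G''$ are quotients of $\Z^2$ and $\Z^3$.
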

\begin{proof}
First note that $G' \ni [a,b^{-1}]=(b,b^{-1})$ and, since $([b,a],1), (1,[b,a]), \in G'$ by Lemma \ref{basilica:multi-lemma}, we obtain $([[b,a],b],1),   (1,[[b,a],b])\in G''$. 
It is easy to check that $[[b,a],a]=1$, so $\gamma_3(G)=\langle [[b,a],b]\rangle ^G $.
This and that $\st_G(1)\leq G\times G$ proves one inclusion of the first item. 
For the other inclusion, it suffices to show that $G'/\gamma_3(G)\times\gamma_3(G)$ is abelian. 
For this, we use that $G'=\langle [a,b^{-1}]\rangle^G$. 
Then $[a, b^{-1}]^{a^{\pm 1}}=[a,b^{-1}]$. 
A calculation yields 
\begin{align*}
[a,b^{-1}]^{b^{2n}} &= (b, b^{-1})^{(a^n,a^n)}\equiv (b[b, a]^n, b^{-1}[b,a]^{-n}) \mod \gamma_3(G)\times\gamma_3(G) \\
[a,b^{-1}]^{b^{2n+1}} &=(b^{-1}[b^{-1}, a], b)^{(a^n,a^n)}\equiv (b^{-1}[b, a]^{-n-1}, b[b,a]^{n}) \mod \gamma_3(G)\times\gamma_3(G)
\end{align*} 
for every $n\in\N$.
Since $a$ commutes with the above elements modulo $\gamma_3(G)\times \gamma_3(G)$, the group $G' / \gamma_3(G)\times \gamma_3(G)$ is generated by the images of $[a,b^{-1}]^m$ for $m\in \Z$, and these clearly commute with each other, which implies that $G'' \leq \gamma_3(G) \times \gamma_3(G)$. 

To show the second item, we examine the image of $\langle [[a,b^{-1}], b]\rangle^G$ modulo $\gamma_3(G)\times \gamma_3(G)$:
 \begin{align*}
 x &:=[[a,b^{-1}],b]=(b^{-1}b^{-a}, b^2)\equiv (b^{-2}[b,a]^{-1}, b^2)\\
x^{b^{2n}} &= x^{(a^n, a^n)} \equiv (b^{-2}[b,a]^{-2n-1}, b^2[b,a]^{2n})\\
x^{b^{2n+1}} &= (x^b)^{(a^n, a^n)} \equiv (b^{2}[b,a]^{2n+2}, b^{-2}[b,a]^{-2n-1}).
\end{align*}
Since $a$ commutes with $x^{b^m}$ for all $m\in\Z$, these conjugates of $x$ generate $\gamma_3(G)$. 
Writing $y:=x^{b^2}x^b\equiv ([b,a]^{-1}, [b,a])$,  it is easily checked that $x^{b^{2n}}\equiv xy^{2n}$ and $x^{b^{2n+1}}\equiv x^{-1}y^{-2n-1}$ modulo $\gamma_3(G)\times \gamma_3(G)$, for all $n\in \Z$. 

Now, $[a,b]^n=(b^{na}, b^{-n})\equiv (b^n[b,a]^n, b^{-n}) \mod \gamma_3(G)\times\gamma_3(G)$ for every $n\in\Z$.
Suppose that  $[a,b]^n\in\gamma_3(G)$ for some $n\in\Z$. 
Then there exist $r, s\in\Z$ such that 
$$x^ry^s\equiv (b^{-2r}[b,a]^{-r-s}, b^{2r}[b,a]^s) \equiv (b^n[b,a]^n, b^{-n}) \equiv [a,b]^n \mod \gamma_3(G)\times\gamma_3(G). $$
Comparing the second coordinate, and using that $b^m\in G'$ if and only if $m=0$ (see Lemma \ref{basilica:multi-lemma}), we get $2r=-n$ and $s=0$. 
Using this for the first coordinate yields that $b^{n+2r}\equiv [b,a]^{-r-n} \mod \gamma_3(G)$. 
But this means that $n=r=-2r$, so $r=0$ and $n=0$. 
This now easily implies that $x$ and $y$ are of infinite order modulo $\gamma_3(G)\times \gamma_3(G)$, proving the second item.

Put $c:=[a,b^{-1}]G''$,  $d:=[a,b^{-1}][a,b^{-1}]^bG''=([a,b], 1)G''$ and  $e:=[a,b^{-1}][a,b^{-1}]^{b^{-1}}G''=(1,[a,b])G''.$
Then $c, d, e$ clearly generate $G'/\left( \gamma_3(G)\times \gamma_3(G) \right) $ and the above argument shows that they are of infinite order and linearly independent, proving the third item. 
\end{proof}

In view of the fact that $\psi(\gamma_3(G))\geq\psi(G'')=\gamma_3(G)\times \gamma_3(G)$, we will take $R=G'$ and $H=\gamma_3(G)$  to apply Theorem \ref{thm:csp}. 
Note that we even have $L_n\leq \rst_G(n)'$ for all $n\in \N$, in the notation of that theorem.
It only remains to show that $G$ and $\gamma_3(G)$ have the $2$-CSP modulo $\gamma_3(G)$ and $\psi^{-1}(\gamma_3(G)\times\gamma_3(G))$, respectively.
The rest of this section is devoted to proving this.
\begin{proposition}\label{prop:infinite_mod_gamma3}
The quotient $G'/\gamma_3(G)$ is infinite cyclic and $G/\gamma_3(G)$ is isomorphic to the integral Heisenberg group.
\end{proposition}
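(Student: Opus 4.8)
The plan is to recognise $G/\gamma_3(G)$ as the maximal class-$2$ nilpotent quotient of $G$ and to read off its structure from the abelianisation $G/G'\cong \Z\times\Z$ (Lemma~\ref{basilica:multi-lemma}) together with the computations already carried out in Lemma~\ref{lem:correct_lemmas89}. The crux is the first assertion, that $Z:=G'/\gamma_3(G)$ is infinite cyclic; the Heisenberg structure then follows formally from the elementary theory of central extensions of $\Z^2$.

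First I would show that $Z$ is cyclic, generated by the image of $[a,b]$. Since $\gamma_3(G)=[G',G]$, the quotient $G/\gamma_3(G)$ is nilpotent of class at most $2$, so $Z=G'/\gamma_3(G)$ is central and is generated by the images of the commutators of the generators $a,b$ of $G$; there is only one such, namely $[a,b]$ (all iterated commutators of higher weight lying in $\gamma_3(G)$). Hence $Z=\langle \overline{[a,b]}\,\rangle$ is cyclic. To see that it is infinite, I would simply invoke the calculation already made in the proof of Lemma~\ref{lem:correct_lemmas89}(ii), where it is shown that $[a,b]^n\in\gamma_3(G)$ forces $n=0$; thus $\overline{[a,b]}$ has infinite order and $Z\cong\Z$. (One may note as a consistency check that $Z\cong (G'/G'')/(\gamma_3(G)/G'')$ has $G'/G''\cong\Z^3$ and $\gamma_3(G)/G''\cong\Z^2$ by Lemma~\ref{lem:correct_lemmas89}(ii),(iii); but the cyclic-plus-infinite argument above pins $Z$ down directly, without worrying about possible torsion in a quotient of free abelian groups.)

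It then remains to assemble $G/\gamma_3(G)$ into the integral Heisenberg group. Writing $x:=\bar a$, $y:=\bar b$ and $z:=\overline{[a,b]}$, we have a central extension
$$1\longrightarrow Z\longrightarrow G/\gamma_3(G)\longrightarrow G/G'\longrightarrow 1$$
with $Z=\langle z\rangle\cong\Z$ central and $G/G'=\langle x\rangle\times\langle y\rangle\cong\Z^2$. Every element of $G/\gamma_3(G)$ can be written as $x^iy^jz^k$, and this expression is unique: projecting to $G/G'\cong\Z^2$ determines $i,j$, after which the fact that $z$ has infinite order determines $k$. Since $[x,y]=z$ and $z$ is central, these are exactly the defining relations $[x,y]=z$, $[x,z]=[y,z]=1$ of the integral Heisenberg group, and the normal form above matches the usual normal form there; hence the surjection from the Heisenberg group onto $G/\gamma_3(G)$ sending the standard generators to $x,y,z$ is an isomorphism.

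The only genuine subtlety — and the main obstacle — is ensuring that the commutator $[x,y]$ is precisely a \emph{generator} of the central $\Z$ and not a proper power of one, since otherwise one would land on a proper central quotient of the Heisenberg group rather than the group itself. This is guaranteed because $Z=G'/\gamma_3(G)$ is generated by $\overline{[a,b]}=z$ itself, so no rescaling of the central parameter is possible; equivalently, the commutator pairing $\wedge^2(G/G')\to Z$ is the unimodular alternating form, which is exactly the extension class of the integral Heisenberg group under $H^{2}(\Z^{2};\Z)\cong\Z$. In short, all the real work sits in Lemma~\ref{lem:correct_lemmas89}, and the present proposition is essentially its corollary.
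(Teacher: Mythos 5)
Your proof is correct, and it reaches the same destination as the paper's (very terse) proof but by a slightly different and arguably tighter route for the first assertion. The paper deduces that $G'/\gamma_3(G)$ is infinite cyclic from items (ii) and (iii) of Lemma~\ref{lem:correct_lemmas89}, i.e.\ from $G'/G''\cong\Z^3$ and $\gamma_3(G)/G''\cong\Z^2$; taken literally this only gives rank one, and one must go back to the explicit generators $c,d,e$ and $x,y$ in that proof to exclude torsion in the quotient. You instead get cyclicity for free from the fact that $\gamma_3(G)=[G',G]$ makes $G/\gamma_3(G)$ nilpotent of class at most $2$ and two-generated, so $G'/\gamma_3(G)=\langle\overline{[a,b]}\rangle$, and you get infiniteness by citing the computation inside the proof of Lemma~\ref{lem:correct_lemmas89}(ii) showing $[a,b]^n\in\gamma_3(G)$ forces $n=0$. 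This sidesteps the torsion issue entirely and makes explicit that the generator of the central $\Z$ is exactly $[\bar a,\bar b]$, which is precisely the point needed to rule out landing in a proper central quotient of the Heisenberg group; your normal-form argument for the central extension then matches what the paper leaves implicit in its second sentence. In short: same ingredients (everything rests on Lemma~\ref{lem:correct_lemmas89}), but your derivation of infinite cyclicity is more self-contained and closes a small gap the paper's one-liner glosses over.
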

\begin{proof}
The first statement follows from the last two items of Lemma \ref{lem:correct_lemmas89}. 
The second follows from the first statement and the fact that $G/G' \cong \Z^2$.
%
%
%
%
\end{proof}

\begin{lemma}\label{lem:g0g1inG'}
If $g\in G'$ is such that $\psi(g)=(g_1,g_2)$ then $g_1g_2\in G'$. Similarly if $g\in G'\st_G(n)$ then $g_1g_2\in G'\st_G(n-1)$.
\end{lemma}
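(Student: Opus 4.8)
The plan is to exploit the fact that $G/G'\cong \Z^2$ is abelian in order to turn the ``sum of coordinates'' operation into a genuine homomorphism. Since the top-level permutation map $G\to \Sym(2)\cong C_2$ has abelian image, its kernel $\st_G(1)$ contains $G'$, so $\psi$ is defined on all of $G'$. I would introduce the map
$$\lambda\colon \st_G(1)\longrightarrow G/G', \qquad \lambda(g)=g_1g_2G' \ \text{ where } \psi(g)=(g_1,g_2).$$
Although $(g_1,g_2)\mapsto g_1g_2$ is not multiplicative, composing with the projection onto the abelian group $G/G'$ makes $\lambda$ a homomorphism. The first assertion of the lemma is exactly the statement that $\lambda(G')=1$, so it suffices to prove this.

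Since $G'=\langle [a,b]\rangle^G$ is the normal closure of a single element, I would establish $\lambda(G')=1$ in two steps. First, a direct computation gives $\psi([a,b])=(b^a,b^{-1})$, and because $a$ and $b$ commute modulo $G'$ we get $b^ab^{-1}\equiv bb^{-1}=1 \pmod{G'}$, i.e. $\lambda([a,b])=1$. Second, I would show that $\lambda$ is invariant under $G$-conjugation on $G'$. For $s\in\st_G(1)$ this is automatic, as $\lambda$ is a homomorphism into an abelian group; the only genuine point is conjugation by $b\notin\st_G(1)$. Here the key computation, using $b=(1,a)\varepsilon$, is that for $g\in\st_G(1)$ with $\psi(g)=(g_1,g_2)$ one has $\psi(g^b)=(g_2^a,g_1)$, whence $\lambda(g^b)=g_2^ag_1G'=g_1g_2G'=\lambda(g)$, again using commutativity of $G/G'$ and $g_2^a\equiv g_2$. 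As $G=\langle \st_G(1),b\rangle$, invariance under $\st_G(1)$ and under $b$ gives invariance under all of $G$. Therefore $\lambda$ is constant on the conjugacy class of $[a,b]$ with value $1$, and since $\lambda$ is a homomorphism and these conjugates generate $G'$, we conclude $\lambda(G')=1$.

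I expect this $b$-conjugation formula $\psi(g^b)=(g_2^a,g_1)$ to be the main obstacle, in the sense that it is the one place where the argument cannot be reduced to abstract nonsense about $G/G'$ and must use the explicit self-similar structure; everything else is formal. For the second statement, I would reduce to the first: write $g=g's$ with $g'\in G'$ and $s\in\st_G(n)$, so that $\psi(s)=(s_1,s_2)$ with $s_1,s_2\in\st_G(n-1)$ by the standard inclusion $\psi(\st_G(n))\leq \st_G(n-1)\times\st_G(n-1)$. Then $g_1g_2=g_1's_1g_2's_2$, and using that $\st_G(n-1)$ is normal in $G$ I can move $s_1$ past $g_2'$ to obtain $g_1g_2\in g_1'g_2'\,\st_G(n-1)$. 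Since $g_1'g_2'\in G'$ by the first part, this yields $g_1g_2\in G'\st_G(n-1)$, as required.
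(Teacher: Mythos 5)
Your proof is correct and takes essentially the same route as the paper's: both hinge on the observation that composing $\psi$ with $(g_1,g_2)\mapsto g_1g_2G'$ gives a homomorphism on $\st_G(1)$ because $G/G'$ is abelian, and both reduce everything to the single computation $\psi([a,b])=(b^a,b^{-1})$ with $b^ab^{-1}\equiv 1 \pmod{G'}$. The only (minor) difference is that the paper disposes of the kernel question via the decomposition $G'=\psi^{-1}(G'\times G')\rtimes\langle[a,b]\rangle$ from Lemma \ref{basilica:multi-lemma}, whereas you verify conjugation-invariance of the homomorphism directly using $\psi(g^b)=(g_2^a,g_1)$ and normal generation of $G'$ by $[a,b]$ --- both are valid, and you additionally spell out the second assertion, which the paper dismisses with ``similarly''.
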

\begin{proof}
Define $\varphi: G\times G\longrightarrow G/G'$ by $(g_1,g_2) \mapsto g_1g_2 G'$. 
We claim that $G'\leq \ker(\varphi\circ\psi)$.
Clearly, $\psi^{-1}(G'\times G')$ is contained in the kernel and, since $G'\geq\psi^{-1}(G'\times G')$, it suffices to check that the image is in the kernel for the generators of $G'$ modulo $\psi^{-1}(G'\times G')$, that is, for $[a,b]$. The result follows because $\psi([a,b])=(b^{a},b^{-1})$.
\end{proof}

\begin{proposition}\label{prop:GhasCSPmodgamma3}
The group $G$ has the $2$-CSP modulo $\gamma_3(G)$. 
\end{proposition}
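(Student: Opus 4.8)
The plan is to work in the Heisenberg quotient $\mathcal H := G/\gamma_3(G)$ and show that the images $\overline{\st_G(n)} := \st_G(n)\gamma_3(G)/\gamma_3(G)$ form a cofinal family among the finite-$2$-index normal subgroups of $\mathcal H$; since a normal subgroup $N\unlhd G$ with $\gamma_3(G)\le N$ and $G/N$ a finite $2$-group is exactly the preimage of such a subgroup $\bar N\unlhd\mathcal H$, this gives the claim. By Proposition \ref{prop:infinite_mod_gamma3}, $\mathcal H$ is nilpotent of class $2$ with centre $\mathcal H'=\langle\overline{[a,b]}\rangle\cong\Z$ and $\mathcal H/\mathcal H'\cong G/G'\cong\Z^2$. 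A routine computation in a finitely generated class-$2$ nilpotent group shows that the ``rectangular'' subgroups $\bar M_k:=\langle\bar a^{\,2^k},\bar b^{\,2^k},\overline{[a,b]}^{\,2^k}\rangle$ are normal, of $2$-power index, and cofinal among all finite-$2$-index normal subgroups of $\mathcal H$. So I would reduce to proving $\overline{\st_G(n)}\le\bar M_k$ for every $k$ and all sufficiently large $n$.

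To this end I would identify $\overline{\st_G(n)}$ itself as a rectangular subgroup. First note that $\bar a^{\,m}$ and $\bar b^{\,m}$ are the images of the honest powers $a^m,b^m$, so they carry no central component. Using the self-similar relations $a=(1,b)$ and $b^2=(a,a)$ one computes the orders $o_n(a),o_n(b)$ of $a,b$ in $G/\st_G(n)$ from the recursion $o_n(a)=o_{n-1}(b)$ and $o_n(b)=2\,o_{n-2}(b)$; these are powers of $2$ tending to infinity, and $a^{o_n(a)},b^{o_n(b)}\in\st_G(n)$ give $\bar a^{\,o_n(a)},\bar b^{\,o_n(b)}\in\overline{\st_G(n)}$. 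For the central direction I would compute the least $c_n>0$ with $\overline{[a,b]}^{\,c_n}\in\overline{\st_G(n)}$. Since $\psi([a,b])=(b^a,b^{-1})$ one has $\psi([a,b]^c)=\big((b^c)^a,b^{-c}\big)$, so that $[a,b]^c\in\st_G(n)$ if and only if $b^c\in\st_G(n-1)$, which again yields a power of $2$ tending to infinity.

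Finally I would compare indices. The three elements above generate a rectangular subgroup $\langle\bar a^{\,o_n(a)},\bar b^{\,o_n(b)},\overline{[a,b]}^{\,c_n}\rangle\le\overline{\st_G(n)}$. For the central extension $1\to\mathcal H'\to\mathcal H\to\Z^2\to1$ one has $[\mathcal H:S]=[\Z^2:\pi(S)]\cdot[\mathcal H':S\cap\mathcal H']$ for every finite-index $S$, so, once one checks that $\pi(\overline{\st_G(n)})=o_n(a)\Z\times o_n(b)\Z$ and $\overline{\st_G(n)}\cap\mathcal H'=\langle\overline{[a,b]}^{\,c_n}\rangle$, both $\overline{\st_G(n)}$ and the rectangular subgroup have index $o_n(a)\,o_n(b)\,c_n$ in $\mathcal H$; the inclusion then forces them to be equal. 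As all three exponents are divisible by $2^k$ once $n$ is large, the three generators lie in $\bar M_k$, whence $\overline{\st_G(n)}\le\bar M_k$, as required.

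The main obstacle I expect is the passage to the quotient modulo $\gamma_3(G)$ in the central computation: a priori $[a,b]^c$ need only lie in $\st_G(n)\gamma_3(G)$ rather than in $\st_G(n)$, so the clean recursion above must be re-established modulo $\gamma_3(G)$. This is exactly where Lemma \ref{lem:g0g1inG'}, controlling the product of the two sections modulo $G'$ one level down, should be used to pin down $c_n$ and to confirm that $\overline{\st_G(n)}$ is genuinely rectangular rather than a sheared subgroup carrying an uncontrolled central coordinate.
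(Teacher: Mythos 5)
Your overall strategy---pass to the Heisenberg quotient $\mathcal H=G/\gamma_3(G)$ and show that the images $\overline{\st_G(n)}$ are cofinal among the normal subgroups of $2$-power index---is sound, and the subgroups $\bar M_k$ are indeed normal and cofinal there. But the two steps you defer are precisely where all the content lies, and one of them is not merely technical. First, the claim $\pi(\overline{\st_G(n)})=o_n(a)\Z\times o_n(b)\Z$ is a genuine ``non-shearing'' assertion that does not follow from general principles: a nested sequence of $2$-power-index subgroups of $\Z^2$ with trivial intersection need not eventually land in $2^k\Z\times 2^k\Z$ (take $\Lambda_n=\langle(1,c_n),(0,2^n)\rangle$ with $c_n$ reducing modulo $2^n$ to a fixed irrational $2$-adic integer: these are nested, of index $2^n$, with trivial intersection, yet each contains $(1,c_n)$). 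Without rectangularity of the image of $\st_G(n)$ in $G/G'$, your index identity $[\mathcal H:\overline{\st_G(n)}]=o_n(a)\,o_n(b)\,c_n$ and the final containment $\overline{\st_G(n)}\le\bar M_k$ both collapse. Establishing it needs the same inductive self-similar computations (e.g.\ $\psi(a^ib^{2m})=(a^m,b^ia^m)$ fed into Lemma \ref{lem:g0g1inG'}) that make up the body of the paper's proof; it is not a routine reduction. The paper sidesteps the issue entirely by refining $G\ge G'$ to the chain $G\ge A\ge G'\ge\gamma_3(G)$ with all successive quotients infinite cyclic, so that Lemma \ref{lem:csp_transitivity} reduces everything to $\Z$, where $2$-power-index subgroups are totally ordered and ``index tends to infinity'' is literally equivalent to cofinality.

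Second, and more seriously, all three of your exponents are computed as honest orders in $G/\st_G(n)$, whereas what the argument requires are the orders of $\bar a$ and $\bar b$ modulo $G'\st_G(n)$ and of $\overline{[a,b]}$ modulo $\gamma_3(G)\st_G(n)$. These a priori divide, and could be strictly smaller than, the honest orders, in which case $\overline{\st_G(n)}$ strictly contains your rectangular subgroup and need not lie in $\bar M_k$. You flag this only for the central coordinate and call it ``the main obstacle'', but Lemma \ref{lem:g0g1inG'} alone does not resolve it: proving that $[a,b]$ has unbounded order modulo $\gamma_3(G)\st_G(n)$ is the hardest part of the paper's proof, requiring the explicit generators $x,y$ of $\gamma_3(G)$ modulo $\gamma_3(G)\times\gamma_3(G)$ from Lemma \ref{lem:correct_lemmas89} and a $2$-adic valuation argument to rule out that some fixed power $[a,b]^{2^K}$ is absorbed at every level. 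As written, the proposal reduces the proposition to two unproved claims that together contain essentially the entire difficulty of the result.
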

\begin{proof}
	It suffices to prove that $G$, $A$, and $G'$ have the $2$-CSP modulo $A$, $G'$ and $\gamma_3(G)$, respectively, and apply Lemma \ref{lem:csp_transitivity} twice. 
	Since $G/A\cong A/G'\cong G'/\gamma_3(G)\cong \Z$, it is enough to show that $|G:A\st_G(n)|$, $|A:G'\st_{A}(n)|$ and $|G':\gamma_3(G)\st_{G'}(n)|$ tend to infinity with $n$. 
	Indeed, since in $\Z$ the subgroups of order a power of $2$ are totally ordered, this will imply that any normal subgroup $N$ of index a power of $2$ in, for instance, $A\leq N\leq G$ will satisfy $N\geq\st_G(n)A$ for some $n\in\N$. 
	
	 We first prove by induction that $b^{2^n}\notin A\st_G(2n+1)$. The base step, $b\notin A\st_G(1)=\st_G(1)$, is clear.
	  Now assume that $b^{2^{n-1}}\notin A\st_G(2n)$ and suppose for a contradiction that $b^{2^n}\in A\st_G(2n+1)$.
	  By Lemma \ref{basilica:multi-lemma}, we have 
	  $$A\st_G(2n+1)=\langle a\rangle G'\st_G(2n+1)=\langle a\rangle \langle [a,b]\rangle \psi^{-1}(G'\times G')\st_G(2n+1).$$
	   So there are $i, j\in \Z$ such that 
	    $[a,b]^ja^ib^{2^n} \in \psi^{-1}(G'\times G')\st_G(2n+1).$
	   Thus 
	   $$\psi([a,b]^ja^ib^{2^n})=((b^a)^ja^{2^{n-1}}, b^{i-j}a^{2^{n-1}})   \in G'\st_G(2n)\times G'\st_G(2n).$$ 
	   Consider $b^{i-j}a^{2^{n-1}}\in G'\st_G(2n)$.
	   As $\psi(b^{i-j}a^{2^{n-1}})=(a^{(i-j)/2}, a^{(i-j)/2}b^{2^{n-1}})$,
	    applying Lemma \ref{lem:g0g1inG'} yields $a^{i-j}b^{2^{n-1}}\in G'\st_G(2n-1)\leq A\st_G(2n-1)$. 
	   This implies that $b^{2^{n-1}}\in A\st_G(2n-1)$, a contradiction.
	   The claim follows by induction.
	   
	   This easily implies that $a^{2^n}\notin G'\st_G(2n+2)$ for each $n\in\N$.
	   Indeed,  $a^{2^n}=(1,b^{2^n})$ and, since $b^{2^n}\notin G'\st_G(2n+1)$, 
	   Lemma \ref{lem:g0g1inG'} yields that $a^{2^n}$ cannot be in $G'\st_G(2n+2)$.
	   
	   Finally, let us prove that $|G':\gamma_3(G)\st_{G'}(n)|$ tends to infinity with $n$. 
	   Suppose that it does not, so there exist $M, K\in\N$ such that $G'/\gamma_3(G)\st_{G'}(m)\cong \Z/2^K\Z$ for all $m\geq M$. 
	   In particular, $[a,b^{-1}]\equiv [a,b]^{-1}$ has order $2^K$ modulo $\gamma_3(G)\st_{G'}(m)$. 
	   By the proof of the second item of  Lemma \ref{lem:correct_lemmas89}, for each $m\geq M$ there exist $r_m, s_m\in \Z$ such that 
	   \begin{equation}\label{eq:[a,b]2Kingamma3st(m)}
	   [a,b^{-1}]^{2^K}\equiv (b^{2^K}, b^{-2^K})\equiv x^{r_m}y^{s_m}\equiv (b^{-2r_m}[b,a]^{-r_m-s_m}, b^{2r_m}[b,a]^{s_m})
	   \end{equation}	   
	   modulo $(\gamma_3(G)\times \gamma_3(G))\st_{G'}(m)\leq \gamma_3(G)\st_G(m-1)\times  \gamma_3(G)\st_G(m-1)$. 
	   In particular, $b^{2^K+2r_m}\in\G'\st_G(m-1)$.
	   We have seen above that $b^{2^n}\in G'\st_G(2n)\setminus G'\st_G(2n+1)$ for all $n\in\N$.
	   Thus $2^{\lfloor m/2 \rfloor}$ divides $2^K+2r_m$. 
	   In other words, denoting by $v(\cdot)$ the 2-adic valuation, $\lfloor m/2\rfloor \leq v(2^K+2r_m)$. 
	   If $v(2r_m)\neq K$ then $v(2^K+2r_m)= \min \{K, v(2r_m) \}\geq \lfloor m/2 \rfloor$, which is impossible for $m> 2K+1$ and so for those $m$ there must be some odd $t_m\in \Z$ such that $2r_m=2^Kt_m$. 
	   Thus equation \ref{eq:[a,b]2Kingamma3st(m)} implies that $b^{2^K+2^Kt_m}[b,a]^{2^K t_m + s_m}, b^{2^K+2^Kt_m}[b,a]^{s_m}\in \gamma_3(G)\st_G(m-1)$ for $m\geq \max \{M, 2K+2\}$, which in turn means that $[b,a]^{2^{K-1}t_m}\in\gamma_3(G)\st_G(m-1)$.
	   Since $G'/\gamma_3(G)\st_G(n)$ is a 2-group for all $n\in \N$, and $t_m$ is odd, we deduce that $[b,a]=[a,b]^{-1}$ has order at most $2^{K-1}$ modulo $\gamma_3(G)\st_G(m-1)$ for all $m\geq \max \{M,2K+2\}$, a contradiction. 
\end{proof}

\begin{proposition}
The group $\gamma_3(G)$ has the $2$-CSP modulo $\psi^{-1}(\gamma_3(G)\times \gamma_3(G))$.
\end{proposition}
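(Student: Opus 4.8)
Since $\psi(G'')=\gamma_3(G)\times\gamma_3(G)$ by Lemma~\ref{lem:correct_lemmas89} and $\psi$ is injective, the subgroup $L=\psi^{-1}(\gamma_3(G)\times\gamma_3(G))$ is precisely $G''$, so the task is to show that $\gamma_3(G)$ has the $2$-CSP modulo $G''$. By the same lemma $\gamma_3(G)/G''\cong\Z^2$, with basis the images $\bar x,\bar y$ of the elements $x=[[a,b^{-1}],b]$ and $y=x^{b^2}x^b$ used in its proof. A normal subgroup of $2$-power index in $\gamma_3(G)$ containing $G''$ corresponds to a subgroup of $2$-power index in $\Z^2$, and every such subgroup contains $2^k\Z^2$ for some $k$. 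Hence it suffices to prove that for each $k\in\N$ there is an $n$ such that the image of $\st_{\gamma_3(G)}(n)$ in $\gamma_3(G)/G''$ lies in $2^k(\gamma_3(G)/G'')$; equivalently, that $x^{\kappa}y^{\lambda}\in G''\st_{\gamma_3(G)}(n)$ forces $2^k\mid\kappa$ and $2^k\mid\lambda$.

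The plan is to detect this membership at the first level through $\psi$. Since $\gamma_3(G)\leq\st_G(1)$ and the first-level sections of its elements again lie in $G$, we have $\psi(\st_{\gamma_3(G)}(n+1))=\psi(\gamma_3(G))\cap(\st_G(n)\times\st_G(n))$. Feeding this, together with $\psi(G'')=\gamma_3(G)\times\gamma_3(G)$, into the modular law (applicable because $\gamma_3(G)\times\gamma_3(G)\leq\psi(\gamma_3(G))$), I would first show that for $g\in\gamma_3(G)$,
$$g\in G''\st_{\gamma_3(G)}(n+1)\iff\psi(g)\in\bigl(\gamma_3(G)\st_G(n)\bigr)\times\bigl(\gamma_3(G)\st_G(n)\bigr).$$
Since $\psi(x)\equiv(b^{-2}[b,a]^{-1},b^2)$ and $\psi(y)\equiv([b,a]^{-1},[b,a])$ modulo $\gamma_3(G)\times\gamma_3(G)$ (from the proof of Lemma~\ref{lem:correct_lemmas89}), taking $g=x^{\kappa}y^{\lambda}$ turns this into the two requirements
$$b^{2\kappa}[b,a]^{\lambda}\in\gamma_3(G)\st_G(n)\qquad\text{and}\qquad b^{-2\kappa}[b,a]^{-\kappa-\lambda}\in\gamma_3(G)\st_G(n).$$

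From these I would read off the divisibilities. Multiplying the two conditions and using that $[b,a]$ is central modulo $\gamma_3(G)$ yields $[b,a]^{\kappa}\in\gamma_3(G)\st_G(n)$; as the order of $[b,a]$ modulo $\gamma_3(G)\st_G(n)$ equals $|G':\gamma_3(G)\st_{G'}(n)|$, a power $2^{f(n)}$ with $f(n)\to\infty$ by Proposition~\ref{prop:GhasCSPmodgamma3}, this gives $2^{f(n)}\mid\kappa$. For $\lambda$, project the first condition to $G/G'\st_G(n)$, killing $[b,a]$, to obtain $b^{2\kappa}\in G'\st_G(n)$; since the order of $b$ modulo $G'\st_G(n)$ is $2^{\lceil n/2\rceil}$ (from $b^{2^j}\in G'\st_G(2j)\setminus G'\st_G(2j+1)$), we get $2^{\lceil n/2\rceil}\mid 2\kappa$.

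The one real obstacle is that knowing $b^{2\kappa}$ only modulo $G'$ does not by itself pin down $\lambda$: in the Heisenberg quotient $G/\gamma_3(G)$ the element $b^{2\kappa}$ could wind nontrivially into the central factor $G'/\gamma_3(G)\cong\Z$, and such winding could absorb $[b,a]^{\lambda}$. I would dispose of this with the elementary computation $b^2=(a,a)$, $a^2=(1,b^2)$, which gives by recursion that the order of $b$ modulo $\st_G(n)$ is \emph{also} exactly $2^{\lceil n/2\rceil}$; thus the least power of $b$ lying in $G'\st_G(n)$ already lies in $\st_G(n)\leq\gamma_3(G)\st_G(n)$. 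Combined with $2^{\lceil n/2\rceil}\mid 2\kappa$ this forces $b^{2\kappa}\in\gamma_3(G)\st_G(n)$, so the first condition collapses to $[b,a]^{\lambda}\in\gamma_3(G)\st_G(n)$ and hence $2^{f(n)}\mid\lambda$. Choosing $n$ with $f(n)\geq k$ then makes both $\kappa$ and $\lambda$ divisible by $2^k$, completing the cofinality argument; with this proposition in hand, Theorem~\ref{thm:csp} (applied with $R=G'$ and $H=\gamma_3(G)$) yields the $2$-CSP for the Basilica group.
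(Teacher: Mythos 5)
Your proof is correct, but it is organised quite differently from the paper's. The paper keeps the splitting strategy it used for $G$ itself: it picks the intermediate subgroup $\langle \beta\rangle(\gamma_3(G)\times\gamma_3(G))$ so that the rank-two quotient $\gamma_3(G)/G''\cong \Z^2$ is broken into two infinite cyclic extensions, runs a separate proof by contradiction for each piece (assuming the relevant index stabilises at $2^K$ and contradicting the order estimates for $b$ and $[b,a]$ from Proposition \ref{prop:GhasCSPmodgamma3}), and glues the pieces with Lemma \ref{lem:csp_transitivity}. You instead treat the rank-two quotient in one stroke: since a subgroup of $2$-power index in $\Z^2$ contains $2^k\Z^2$, the problem becomes a direct divisibility statement for the exponents $\kappa,\lambda$, and your modular-law equivalence $g\in G''\st_{\gamma_3(G)}(n+1)\iff\psi(g)\in\gamma_3(G)\st_G(n)\times\gamma_3(G)\st_G(n)$ (which is valid; the paper only ever uses the forward inclusion) converts it into coordinatewise conditions. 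The numerical inputs are the same in both arguments, namely that the orders of $[b,a]$ modulo $\gamma_3(G)\st_G(n)$ and of $b$ modulo $G'\st_G(n)$ tend to infinity --- note these are established inside the \emph{proof} of Proposition \ref{prop:GhasCSPmodgamma3} rather than in its statement, so cite them as such. The one genuinely new ingredient you need, and correctly supply, is that the order of $b$ modulo $\st_G(n)$ already equals its order modulo $G'\st_G(n)$ (both $2^{\lceil n/2\rceil}$), which cleanly disentangles the $b$- and $[b,a]$-parts in the first coordinate; the paper's contradiction arguments deal with this entanglement implicitly by matching the coefficients $r_m, s_m$. Your version is arguably preferable: it avoids the auxiliary intermediate subgroup and the two contradiction arguments, and it gives an explicit quantitative statement about how deep in $2^k\Z^2$ the image of $\st_{\gamma_3(G)}(n+1)$ lands.
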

\begin{proof}
This is proved like the previous result. 
In the proof of Lemma \ref{lem:correct_lemmas89}, we saw that $\psi(\gamma_3(G))$ is generated by $(b^{-2}[b,a]^{-1}, b^2)$  and $([b,a]^{-1}, [b,a])$
modulo $\gamma_3(G)\times \gamma_3(G)$, so it is also generated by $\alpha:=(b^{2}[b,a], b^{-2})$ and $\beta:=(b^{-2}, b^{2}[b,a]^{-1} )$. 

We first show that $\gamma_3(G)$ has the 2-CSP modulo $\langle \beta\rangle \gamma_3(G)\times \gamma_3(G)$.
Suppose for a contradiction that there exist $M, K\in\N$ such that $\gamma_3(G)/\langle \beta\rangle (\gamma_3(G)\times\gamma_3(G))\st_{\gamma_3(G)}(m) \cong \Z/2^K\Z$ for all $m\geq M$. 
That is, for all $m\geq M$ there exists $r_m\in \Z$ such that 
$$\alpha^{2^K}=(b^{2^{K+1}}[b,a]^{2^K}, b^{-2^{K+1}})\equiv \beta^{r_m}=(b^{-2r_m}, b^{2r_m}[b,a]^{-r_m})$$
modulo $(\gamma_3(G)\times\gamma_3(G))\st_{\gamma_3(G)}(m)\leq \gamma_3(G)\st_G(m-1)\times\gamma_3(G)\st_G(m-1)$.
In particular, 
$$b^{2^{K+1}+2r_m}[b,a]^{2^K}, b^{2^{K+1}+2r_m}[b,a]^{-r_m}\in \gamma_3(G)\st_G(m-1)$$
and therefore $[b,a]^{2^K}\gamma_3(G)\st_G(m-1) = [b,a]^{-r_m}\gamma_3(G)\st_G(m-1) $.
We saw in the proof of \ref{prop:GhasCSPmodgamma3} that $G'/\gamma_3(G)\st_G(n)\cong \Z/2^{t_n}\Z$ where $t_n$ tends to infinity with $n$, which means that
$-r_m=2^K$ for all large enough $m$. 
Thus $b^{2^{K+1}-2^{K+1}}[b,a]^{2^K} \in \gamma_3(G)\st_G(m-1)$ for all large enough $m$,%
contradicting that $G'$ has the 2-CSP modulo $\gamma_3(G)$. 

To show that $\langle \beta\rangle (\gamma_3(G)\times \gamma_3(G))$ has the 2-CSP modulo $\gamma_3(G)\times \gamma_3(G)$, 
suppose for a contradiction that there exist $M, K\in \N$ such that $\beta$ has order $2^K$ modulo $(\gamma_3(G)\times \gamma_3(G))\st(m)$
for all $m\geq M$. 
This means that $(b^{-2^{K+1}}, b^{2^{K+1}}[b,a]^{-2^K} )\in (\gamma_3(G)\times \gamma_3(G))\st(m)\leq (\gamma_3(G)\st(m-1)\times \gamma_3(G)\st(m-1))$ for all $m\geq M$. 
In particular, $b^{2^{K+1}}\in G'\st(m-1)$ for all $m\geq M$, a contradiction to the proof of \ref{prop:GhasCSPmodgamma3}. 

Lemma \ref{lem:csp_transitivity} now yields the result. 
%
%
%
%
%
%
%
%
%
\end{proof}

\bibliographystyle{plain}
\bibliography{almost_csp}

\end{document}